\renewcommand\section{\@startsection
{section}
{1}
{\z@}
{0.5\baselineskip}
{0.5\baselineskip}
{\centering\normalfont}}
\newcommand{\Kl}{\left(}
\newcommand{\Kr}{\right)}
\newcommand{\hh}{\hspace{4pt}}
\newcommand{\hhh}{\hspace{8pt}}
\newcommand{\hhhh}{\hspace{16pt}}
\newcommand{\NN}{\mathbb{N}}
\newcommand{\RR}{\mathbb{R}}
\newcommand{\Ff}{\mathcal{F}}
\newcommand{\Pc}{\mathcal{P}}
\newcommand{\Ptwo}{\mathcal{P}_2}
\renewcommand{\div}{\mathop{\mathrm{div}}}
\newcommand{\grad}{\mathop{\mathrm{grad}}}
\newcommand{\abs}[1]{\left\lvert#1\right\rvert}
\newcommand{\Prob}{\mathop{\mathrm{Prob}}}
\newcommand{\AC}{\mathop{\mathrm{AC}}\!}
\newtheorem{theorem}{Theorem}[section]
\newtheorem{lemma}[theorem]{Lemma}
\newtheorem{proposition}[theorem]{Proposition}
\newtheorem{corollary}[theorem]{Corollary}
\newtheorem{remark}[theorem]{Remark}
\newtheorem{assumption}[theorem]{Assumption}
\begin{document}

\title{Wasserstein gradient flows from large deviations of thermodynamic limits}
\author{Manh Hong Duong\textsuperscript{1}, Vaios Laschos\textsuperscript{1} and Michiel Renger\textsuperscript{2}
\\ \small{\textsuperscript{1} Department of Mathematical sciences, University of Bath, }
\\ \small{\textsuperscript{2} ICMS and Dep. of Math. and Comp. Sciences, TU Eindhoven.}} 

\title{Wasserstein gradient flows from large deviations of thermodynamic limits}

\date{\today}

\maketitle
\begin{abstract}

We study the Fokker-Planck equation as the thermodynamic limit of a stochastic particle system on one hand and as a Wasserstein gradient flow on the other.
We write the rate functional, which characterizes the large deviations from the thermodynamic limit, in such a way that the free energy appears explicitly.
Next we use this formulation via the contraction principle to prove that the discrete time rate functional is asymptotically equivalent in the Gamma-convergence sense to the functional derived from the Wasserstein gradient discretization scheme. %The main result extends those in \cite{Adams2010, Peletier2011, Laschos2012} to all probability measures in the real line with finite second moment.
\end{abstract}

\maketitle
\section{Introduction}
\label{sec:introduction}
Since the seminal work of Jordan, Otto and Kinderlehrer \cite{Jordan1998}, it has become clear that there are many more partial differential equations that can be written as a gradient flow than previously known. Two important insights have contributed to this: the generalisation of gradient flows to metric spaces and the specific choice of the Wasserstein metric as the dissipation mechanism. The paper by Jordan, Kinderlehrer and Otto introduced a gradient-flow structure by approximation in discrete time. More recent work have shown how these ideas can be studied in continuous time \cite{Otto2001}, and how they can be generalised to any metric space \cite{Ambrosio2008}. This paper is mainly concerned with the time-discrete scheme, which we shall now explain.

A gradient flow in $L^2(\RR^d)$ is an evolution equation of the form
\begin{equation}
\label{def:L2 gradient flow}
  \frac{\partial \rho}{\partial t}=-\text{grad}_{L^2} \mathcal{F}(\rho),
\end{equation}
for some functional $\mathcal{F}$. For a gradient flow it is natural to use the following time-discrete variational scheme. If $\rho_0$ is the solution at time $t=0$, then the solution at time $\tau>0$ is approximated by the minimiser of the functional
\begin{equation*}
\label{def:L2 variational scheme}
  \rho\mapsto \mathcal{F}(\rho) + \frac1{2\tau}\|\rho-\rho_0\|^2_{L^2({\RR^d})}.
\end{equation*}
Indeed, the Euler-Lagrange equation is then $\frac{\rho_\tau-\rho_0}{\tau} = -\text{grad}_{L^2} \mathcal{F}(\rho_\tau)$, which clearly approximates \eqref{def:L2 gradient flow} as $\tau\to0$. In the same manner, one can define a variational scheme by minimising the functional
\begin{equation}
\label{def:W2 variational scheme}
  \rho\mapsto \mathcal{F}(\rho) + \frac1{2\tau}W_2^2(\rho,\rho_0),
\end{equation}
where $W_2$ is the Wasserstein metric. Convergence of this variational scheme was first proven in \cite{Jordan1998} with the choice of $\mathcal{F}(\rho):=\mathcal{S}(\rho)+\mathcal{E}(\rho)$, where
\begin{align}
\label{eq:free energy}
  &&\mathcal{E}(\rho)=\int_{\RR^{d}}\Psi(x)\rho(dx)
  &&\text{and}
  &&\mathcal{S}(\rho):= \begin{cases}\int_{\RR^{d}}\rho(x)\log\rho(x)\,dx,  &\text{for } \rho(dx)=\rho(x)\,dx\\
                                       \infty,                                &\text{otherwise},
                          \end{cases}
\end{align}
for some potential $\Psi$. In this case, the minimisers converge to the solution of the Fokker-Planck equation
\begin{equation}
\label{eq:Fokker-Planck}
  \frac{\partial \rho}{\partial t} = \Delta \rho + \div (\rho \nabla\Psi).
\end{equation}
Later, in \cite{Otto2001}, this result was extended to more general $\mathcal{F}$, but we will be concerned with the specific choice~\eqref{eq:free energy}. Physically, $\mathcal{S}$ can be interpreted as entropy, $\mathcal{E}$ as internal energy, and $\mathcal{F}$ as the corresponding Helmholtz free energy (if the temperature effects are hidden in $\Psi$); hence it is not surprising that this free energy should decay along solutions of \eqref{eq:Fokker-Planck}. However, it is not intuitively clear why the dissipation of free energy must be described by the Wasserstein metric.

As we will explain in Section~\ref{sec:part system ldps}, for systems in equilibrium, the stochastic fluctuations around the equilibrium are characterised by a free energy similar to~\eqref{eq:free energy}. Recent developments suggest a similar principle for systems away from equilibrium \cite{Leonard2007, Adams2010, Peletier2011, Laschos2012, Adams2012}. To explain this, consider $N$ independent random particles in $\RR^d$ with positions $X_k(t)$, initially distributed by some $\rho_0\in\mathcal{P}(\RR^d)$, where the probability distribution of each particle evolves according to \eqref{eq:Fokker-Planck}. Define the corresponding \emph{empirical process}
\begin{equation*}
  L_N:t \mapsto \frac{1}{N}\sum_{k=1}^{N}\delta_{X_k(t)}.
\end{equation*}
Then, as a consequence of the Law of Large Numbers, at each $\tau\geq0$ the \emph{empirical measure} $L_N(\tau)$ converges almost surely in the narrow topology as $N\to\infty$ to the solution of the Fokker-Planck equation \eqref{eq:Fokker-Planck} with initial condition $\rho_0$ \cite{Dudley1989}; this is sometimes known as the thermodynamic limit. The rate of this convergence is characterised by a large deviation principle. Roughly speaking, this means that there exists a $J_\tau:\Pc(\RR^d)\to[0,\infty]$ such that (see Section~\ref{sec:part system ldps})
\begin{equation*}
  \Prob\left(L_N(\tau)\approx \rho \,\vert\, L_N(0)\approx \rho_0\right) \sim \exp\left(-N\,J_\tau(\rho|\rho_0)\right) \hspace{1cm} \text{ as } N\to\infty.
\end{equation*}

In \cite[Prop. 3.2]{Leonard2007} and \cite[Cor. 13]{Peletier2011}, it was found that
\begin{equation}
\label{eq:quenched ldp}
  J_\tau(\rho|\rho_0)=\inf\Big\{ \mathcal{H}(\gamma|\rho_0\otimes p_\tau): \gamma\in\Pi(\rho_{0},\rho)\Big\},
\end{equation}
where $\mathcal{H}$ is the relative entropy (discussed in Section~\ref{sec:part system ldps}), $p_t$ is the fundamental solution of the Fokker-Planck equation~\eqref{eq:Fokker-Planck} and $\Pi(\rho_{0},\rho)$ is the set of all Borel measures in $\RR^{2d}$ that have first and second marginal $\rho_{0}$ and $\rho$ respectively. In this paper, we characterise a class of potentials $\Psi$ and initial data $\rho_{0}$ for which~\eqref{eq:quenched ldp} is equal to
\begin{equation}
\label{eq:contracted path ldp}
  J_\tau(\rho|\rho_0)=\inf_{\rho_{(\cdot)}\in C_{W_{2}}(\rho_0,\rho)}\Bigg\{\frac{1}{4\tau}\int_{0}^{1}\left\|\frac{\partial\rho_{t}}{\partial t}\right\|^{2}_{-1,\rho_{t}}dt + \frac{\tau}{4}\int_{0}^{1}\left\|\grad\Ff(\rho_{t})\right\|^{2}_{-1,\rho_{t}}\,dt +\frac{1}{2}\mathcal{F}(\rho_{1})-\frac{1}{2}\mathcal{F}(\rho_{0})\Bigg\}.
\end{equation}
where the $\| \cdot\|_{-1,\rho}$ norm and the exact meaning of $ \grad \Ff$ will be defined in the sequel.
 In the main theorem, by using the above equality, we show that the Wasserstein scheme \cite{Jordan1998} has the same asymptotic behavior with $J_\tau$ for $\tau\rightarrow 0$, in terms of Gamma-convergence (see \cite{Braides2002} for an exposition of Gamma-convergence).
\begin{theorem}\label{th:main result}Let  $\rho_{0}=\rho_0(x)dx\in\Pc_2(\RR)$  be absolutely continuous with respect to the Lesbegue measure with $\rho_0(x)$  is bounded from below by a positive constant in every compact set. Assume that  $\mathcal{F}(\rho_{0}),\|\Delta\rho_{0}\|^{2}_{-1,\rho_{0}}$ and $\int_{\RR}|\nabla\Psi(x)|^{2}\,\rho_0(dx)$ are all finite, and that $\Psi\in C^{2}(\RR)$ satisfies either Assumption~\ref{as:subquadratic} or ~\ref{as:superquadratic} (introduced in Section~\ref{sec:large deviations}).  Then we have
\begin{equation}
\label{eq:main result}
  J_{\tau}(\cdot\,|\rho_0)-\frac{W_{2}^{2}(\rho_{0},\,\cdot\,)}{4\tau}\xrightarrow[\tau\to0]{\Gamma} \frac{1}{2}\mathcal{F}(\,\cdot\,)-\frac{1}{2}\mathcal{F}(\rho_{0}), \hspace{3cm}\text{in } \Ptwo(\RR).
\end{equation}
\end{theorem}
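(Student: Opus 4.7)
The central observation is that formula (\ref{eq:contracted path ldp}) rewrites
\[
  J_\tau(\rho|\rho_0) - \frac{W_2^2(\rho_0,\rho)}{4\tau}
  = \frac{1}{2}\mathcal{F}(\rho) - \frac{1}{2}\mathcal{F}(\rho_0) + R_\tau(\rho),
\]
where the ``gap''
\[
  R_\tau(\rho) := \inf_{\rho_{(\cdot)}\in C_{W_2}(\rho_0,\rho)} \bigl[A_\tau(\rho_{(\cdot)}) + B_\tau(\rho_{(\cdot)})\bigr] - \inf_{\rho_{(\cdot)}\in C_{W_2}(\rho_0,\rho)} A_\tau(\rho_{(\cdot)}),
\]
with $A_\tau(\rho_{(\cdot)}) = \tfrac{1}{4\tau}\int_0^1 \|\partial_t\rho_t\|_{-1,\rho_t}^2\,dt$ and $B_\tau(\rho_{(\cdot)})=\tfrac{\tau}{4}\int_0^1 \|\grad\Ff(\rho_t)\|_{-1,\rho_t}^2\,dt$. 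Here the Benamou--Brenier formula is used to identify $\inf A_\tau$ with $W_2^2(\rho_0,\rho)/(4\tau)$. Thus the whole Gamma-convergence statement reduces to showing that $R_\tau(\rho_\tau)\to 0$ can be achieved along a suitable recovery sequence, and that $R_\tau\ge 0$ is compatible with lower semi-continuity of $\Ff$.

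\textbf{Liminf inequality.} Since $B_\tau\ge 0$, the elementary fact $\inf(A+B)\ge \inf A$ yields $R_\tau(\rho)\ge 0$ for every $\rho$. Hence, for any sequence $\rho_\tau\to\rho$ in $\Ptwo(\RR)$,
\[
  \liminf_{\tau\to 0}\Bigl[J_\tau(\rho_\tau|\rho_0) - \tfrac{W_2^2(\rho_0,\rho_\tau)}{4\tau}\Bigr] \ge \liminf_{\tau\to 0}\Bigl[\tfrac{1}{2}\Ff(\rho_\tau) - \tfrac{1}{2}\Ff(\rho_0)\Bigr] \ge \tfrac{1}{2}\Ff(\rho) - \tfrac{1}{2}\Ff(\rho_0),
\]
provided $\Ff$ is $W_2$-lower semi-continuous. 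The entropy $\mathcal{S}$ is well known to be $W_2$-lower semi-continuous, and $\mathcal{E}(\rho)=\int\Psi\,d\rho$ is $W_2$-lower semi-continuous under the growth condition in Assumption~\ref{as:subquadratic} or \ref{as:superquadratic}, so this step is essentially immediate.

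\textbf{Limsup / recovery sequence.} If $\Ff(\rho)=\infty$ there is nothing to prove; take $\rho_\tau\equiv\rho$. Otherwise, the natural candidate is the constant sequence $\rho_\tau=\rho$ together with the $W_2$-geodesic $\rho^{\mathrm{geo}}_t$ from $\rho_0$ to $\rho$ (which in one dimension is explicit via the monotone rearrangement $T_t=(1-t)\mathrm{id}+tT$ with $T=F_\rho^{-1}\circ F_{\rho_0}$). For this path $A_\tau(\rho^{\mathrm{geo}}) = W_2^2(\rho_0,\rho)/(4\tau)$ exactly, so
\[
  R_\tau(\rho) \le B_\tau(\rho^{\mathrm{geo}}) = \frac{\tau}{4}\int_0^1 \|\grad\Ff(\rho^{\mathrm{geo}}_t)\|_{-1,\rho^{\mathrm{geo}}_t}^2\,dt,
\]
which is $O(\tau)$ once the integral on the right is finite. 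To establish finiteness we bound the relative Fisher information $\int \rho^{\mathrm{geo}}_t|\nabla\log\rho^{\mathrm{geo}}_t+\nabla\Psi|^2\,dx$ uniformly in $t\in[0,1]$ by combining two ingredients: (i) in $1$D, the density $\rho^{\mathrm{geo}}_t$ can be computed via the optimal transport map and remains bounded below on compacts (using the hypothesis that $\rho_0$ is), hence its Fisher information admits a one-dimensional displacement-convexity-type bound in terms of the endpoint Fisher informations $\|\Delta\rho_0\|_{-1,\rho_0}^2$ and $\|\Delta\rho\|_{-1,\rho}^2$; (ii) the $L^2(\rho_t)$-norm of $\nabla\Psi$ along the geodesic is controlled by the endpoint quantities $\int|\nabla\Psi|^2\,d\rho_0$ and $\int|\nabla\Psi|^2\,d\rho$ together with the sub/super-quadratic growth from Assumption~\ref{as:subquadratic}/\ref{as:superquadratic}. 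If $\rho$ fails to satisfy these finiteness conditions at the endpoint, I would first regularise: approximate $\rho$ in $W_2$ by a sequence $\rho^\eta$ (e.g.\ Gaussian convolution, truncation, and renormalisation) with finite Fisher information and $\int|\nabla\Psi|^2\,d\rho^\eta$ finite, and such that $\Ff(\rho^\eta)\to\Ff(\rho)$; then extract a diagonal subsequence $\rho_\tau=\rho^{\eta(\tau)}$ with $\eta(\tau)\to 0$ sufficiently slowly that $\tau\int_0^1\|\grad\Ff(\rho^{\mathrm{geo},\eta(\tau)}_t)\|_{-1,\rho^{\mathrm{geo},\eta(\tau)}_t}^2\,dt\to 0$.

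\textbf{Main obstacle.} The hard step is unambiguously the uniform bound on the Fisher-information-type integral along the Wasserstein geodesic, because this quantity mixes second-order regularity of the density (through $\nabla\log\rho^{\mathrm{geo}}_t$) with the growth of $\Psi$. The ``bounded below on compacts'' hypothesis on $\rho_0$ is precisely what prevents $|\nabla\log\rho^{\mathrm{geo}}_t|$ from blowing up, while the sub/super-quadratic dichotomy is the device that rules out a pathological interaction between $\nabla\Psi$ and the interpolated density. The diagonal argument for densities without finite Fisher information is delicate because one must coordinate three small parameters ($\tau$, the regularisation scale $\eta$, and the Wasserstein distance $\rho_\tau\to\rho$) so that $\Ff(\rho_\tau)\to\Ff(\rho)$ holds while $\tau B_\tau \to 0$; this is where I expect the bulk of the technical work to lie.
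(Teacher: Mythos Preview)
Your overall architecture matches the paper's exactly: the liminf follows from $R_\tau\ge 0$ plus lower semicontinuity of $\Ff$, and the limsup is obtained by testing with the Wasserstein geodesic and then running a density/diagonal argument. The paper's Proposition~4.5 (and its superquadratic analogue) is precisely the tool that justifies the splitting you write down, and the paper's Proposition~6.2 is the abstract diagonal lemma you have in mind.

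The gap is in the recovery step, and it is exactly the point you flag as the ``main obstacle'' but do not resolve. Your proposed control of the Fisher information along the geodesic via a ``displacement-convexity-type bound'' does not exist in the form you need: the Fisher information is \emph{not} displacement convex, so there is no direct inequality $\|\Delta\rho^{\mathrm{geo}}_t\|_{-1,\rho^{\mathrm{geo}}_t}^2 \le C(\|\Delta\rho_0\|_{-1,\rho_0}^2 + \|\Delta\rho_1\|_{-1,\rho_1}^2)$ for arbitrary endpoints. Likewise, in the superquadratic case $|\nabla\Psi|^2$ can grow arbitrarily fast, so $\int |\nabla\Psi|^2\,d\rho^{\mathrm{geo}}_t$ is not controlled by the endpoint values alone. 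The paper's way around both difficulties is a specific choice of dense set that your regularisation (Gaussian convolution, truncation, renormalisation) does not produce: one approximates $\rho_1$ by measures that \emph{agree with $\rho_0$ outside a compact interval} $[-M,M]$. In one dimension this forces the optimal map to satisfy $T(x)=x$ for $|x|>M$, hence $T'$ is globally bounded and $T''$ is compactly supported. That is what drives the Fisher-information estimate along the geodesic (the paper's Lemma~6.3), and it also trivialises the $\int|\nabla\Psi|^2\,d\rho_t$ bound since $\rho_t=\rho_0$ outside $[-M,M]$. Without this ``equal tails'' trick, neither of your items (i)--(ii) goes through, and it is also the reason the argument is confined to $d=1$.
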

Here $\Ptwo(\RR)$ denotes the space of probability measures on $\RR$ having finite second moment. As we will prove, the Gamma-convergence result holds if $\Ptwo(\RR)$ is equipped with the narrow topology, as well as if we equip it with the Wasserstein topology. More precisely: we will prove the lower bound in the narrow topology (Theorem~\ref{th:lower bound}), and the existence of the recovery sequence (Theorem~\ref{th:recovery sequence}) in the Wasserstein topology. In the Wasserstein topology, the Gamma-convergence~\eqref{eq:main result} immediately implies:
\begin{equation}
\label{eq:1st order gamma convergence}
  \tau\,J_\tau(\cdot\,|\rho_{0}) \xrightarrow[\tau\to0]{\Gamma} \frac14 W_2^2(\rho_0,\,\cdot\,)  \hspace{1.5cm} \text{ in } \Pc_2(\RR).
\end{equation}
For a system of Brownian particles, i.e. $\Psi\equiv0$, statement~\eqref{eq:1st order gamma convergence} can also be found in \cite{Leonard2007}. Together, the two statements \eqref{eq:main result} and \eqref{eq:1st order gamma convergence} make up an asymptotic development of the rate $J_\tau$ for small $\tau$, i.e.
\begin{equation*}
%\label{eq:JKO scheme}
  J_\tau(\rho|\rho_{0}) \approx \frac12 \mathcal{F}(\rho)-\frac12 \mathcal{F}(\rho_0) + \frac1{4\tau} W_2^2(\rho_0,\rho).
\end{equation*}
Apart from the factor $1/2$ and the constant $\mathcal{F}(\rho_0)$, which do not affect the minimisers, this approximation indeed corresponds to the functional defining the time-discrete variational scheme~\eqref{def:W2 variational scheme} from \cite{Jordan1998}.

For $\Psi=0$, the main statement~\eqref{eq:main result} was proven in \cite{Adams2010} in a subset of $\Ptwo(\RR)$ consisting of measures that are sufficiently close to a uniform distribution on a compact interval. In \cite{Peletier2011}, it was proven that whenever \eqref{eq:main result} holds for $\Psi=0$, then it also holds for any $\Psi\in C^2_b(\RR^d)$. Both papers make use of the specific form of the fundamental solution of~\eqref{eq:Fokker-Planck}. In \cite {Laschos2012}, ~\eqref{eq:main result} was shown for Gaussian measures on the real line. In our approach, using the path-wise large deviations, we can avoid using the fundamental solution, allowing us to prove the statement in a much more general context.

All theorems in this paper also work in higher dimensions, except for the existence of the recovery sequence in the main theorem. This has to do with the fact that in one dimension the optimal transport plan between two measures with equal tails will be the identity at the tails. However, this argument fails in higher dimensions. We belief that the recovery sequence also exists in higher dimensions but this is left for future research.

The required concepts of this paper are introduced in Section~\ref{sec:preliminaries}. In Section~\ref{sec:part system ldps}, we explain the concept of large deviations in the case of an equilibrium system, introduce the dynamical particle system that we study more precisely, and discuss the conditional large deviations for this system. The alternative form of the functional $J_h$ is proven in Section~\ref{sec:large deviations} via the path-wise large deviation principles. Finally, in Section~\ref{sec:lower bound} we prove the Gamma-convergence lower bound, and in Section~\ref{sec:recovery sequence} the existence of the recovery sequence.

\section{Preliminaries}
\label{sec:preliminaries}

By the nature of this study, we need a combination of techniques from probability theory, mostly from the theory of large deviations, and from functional analysis, mostly from the gradient flow calculus as set out in \cite{Ambrosio2008}. Let us introduce these concepts here.

To begin, let us discuss the topological measure spaces. Unless otherwise stated, the space of probability measures $\Pc(\RR^d)$ will be endowed with the narrow topology, defined by convergence against continuous bounded test functions:
\begin{align*}
  \rho_t \to \rho \text{ as } t\to0 \text{ if and only if } \int_{\RR^d}\!\phi\,d\rho_t \to \int_{\RR^d}\!\phi\,d\rho \text{ for all } \phi\in C_b(\RR^d).
\end{align*}
We sometimes identify measures with densities when possible, which is typically the case if a measure has finite entropy. The space $\Ptwo(\RR^d)=\left\{ \rho \in \Pc(\RR^d): \int\! |x|^2\,\rho(dx) < \infty \right\}$ will be endowed with the topology generated by the Wasserstein metric $W_2$.  The Wasserstein distance of two measures $\rho_{0},\rho \in \Ptwo(\RR^{d})$ is defined via
\begin{equation*}
  W_{2}^{2}(\rho_{0},\rho)=
  \inf_{\gamma\in\Pi(\rho_{0},\rho)}\left\{\int_{\RR^{n}}\int_{\RR^{n}}
  |x-y|^{2} d\gamma\right\}.
\end{equation*}

Convergence in the Wasserstein topology can be characterised as (see e.g. \cite{Villani2003, Ambrosio2008}):
\begin{align*}
  \rho_t \to \rho \text{ as } t\to0 \text{ if and only if } &(i) \quad \rho_t\to\rho \text{ narrowly, and} \\
                                                            &(ii) \quad \int_{\RR^d}\!|x|^2\,d\rho_t\to\int_{\RR^d}\!|x|^2\,d\rho.
\end{align*}

We write $C([0,1],\Pc(\RR^{d}))$ for the space of narrowly continuous curves $[0,1]\to\Pc(\RR^d)$, and $C(\rho_0,\rho)$ for the space of narrowly continuous curves $[0,1]\to\Pc(\RR^{d})$ starting in $\rho_0$ and ending in $\rho$. Similarly, for Wasserstein-continuous curves in $\Ptwo(\RR^d)$ we write $C_{W_2}([0,1],\Ptwo(\RR^{d}))$ and $C_{W_2}(\rho_0,\rho)$.

Furthermore, we use two different notions of absolutely continuous curves. The first notion is taken from \cite[Def. 4.1]{Dawson1987}. Let $\mathcal{D}=C_c^\infty(\RR^d)$ be the space of test functions with the corresponding topology (see \cite[Sect. 6.3]{Rudin1973}), let $\mathcal{D}'$ be its dual, consisting of the associated distributions, and let $\langle\,,\,\rangle$ be the dual pairing between $\mathcal{D}'$ and $\mathcal{D}$. We will identify a measure $\rho\in \Pc(\RR^d)$ with a distribution by setting $\langle\rho,f\rangle:=\int\!f \,d\rho$. Denote by $\mathcal{D}_K\subset \mathcal{D}$ the subspace of all Schwartz functions with compact support $K\subset\RR^d$. Then a curve $\rho_{(\cdot)}: [0,1]\rightarrow\mathcal{D'}$ is said to \textit{be absolutely continuous in the distributional sense} if for each compact set $K\subset \RR^{d}$ there is a neighborhood $U_K$ of $0$ in $\mathcal{D}_K$ and an absolutely continuous function $G_K: [0,1]\rightarrow\RR$ such that
\begin{equation*}
  |\langle\rho_{t_2},f\rangle-\langle\rho_{t_1},f\rangle|\leq |G_K(t_2)-G_K(t_1)|,
\end{equation*}
for all $0<t_1,t_2<1$ and $f\in U_K$. We denote by $\AC\Kl[0,1];\mathcal{D}'\Kr$ the set of all absolutely continuous maps in distributional sense. Note that if a map $\rho_{(\cdot)}: [0,1]\rightarrow\mathcal{D'}$ is absolutely continuous then the derivative in the distributional sense $\dot{\rho}_t=\lim_{\tau\rightarrow0}\frac{1}{\tau}(\rho_{t+\tau}-\rho_{t})$ exists for almost all $t\in [0,1]$.

Secondly, we say a curve $\rho_{(\cdot)}:[0,1]\to\Ptwo(\RR^d)$ is \textit{absolutely continuous in the Wasserstein sense} if there exists a $g\in L^1(0,1)$ such that
\begin{equation*}
  W_2(\rho_{t_1},\rho_{t_2}) \leq \int_{t_1}^{t_2} g(t)\,dt
\end{equation*}
for all $0<t_1 \leq t_2<1$ (see for example \cite{Ambrosio2008}). We denote the set of absolutely continuous curves as $\AC_{W_2}([0,1];\Ptwo(\RR^d))$.

For an absolutely continuous curve $\rho_{(\cdot)}$ there is a unique Borel field $v_t \in V:=\overline{\left\{\nabla p:p \in \mathcal{D}\right\}}^{L^2(\rho_t)}$ such that the continuity equation holds \cite[Th. 8.3.1]{Ambrosio2008}:
\begin{equation}
\label{eq:cont equation}
  \frac{\partial\rho_t}{\partial t}+ \div(\rho_t \,v_t)=0 \hspace{1cm} \text{in distributional sense.}
\end{equation}
This motivates the identification of the tangent space\footnote{Here we like to point out that in \cite{Ambrosio2008} the tangent space is identified with the set of velocity fields $V$.} of $\Ptwo(\RR^d)$ at $\rho$ with all $s\in \mathcal{D}'$ for which there exists a $v\in V$ such that
\begin{equation}
\label{eq:tangent space condition}
  s + \div(\rho \,v)=0 \hspace{1cm} \text{in distributional sense}.
\end{equation}
The following inner product on the tangent space at $\rho$ is the metric tensor corresponding to the Wasserstein metric \cite{Otto2001}
\begin{equation*}
  ( s_1, s_2)_{-1,\rho}:=\frac{1}{2}\int_{\RR^d}\!v_1 \cdot v_2\,d\rho,
\end{equation*}
where $v_1$ and $v_2$ are associated with $s_1$ and $s_2$ through \eqref{eq:tangent space condition}. The corresponding norm coincides with the dual operator norm on $\mathcal{D'}$
\begin{equation}
\label{def:-1 norm}
  \| s \|^2_{-1,\rho} := \sup_{p\in\mathcal{D}} \,\left\{ \langle s,p\rangle - \frac12\int_{\RR^d}\!|\nabla p|^2d\rho\right\}.
\end{equation}
This norm is closely related to the Wasserstein metric through the Benamou-Brenier formula \cite{Benamou2000}
\begin{equation}
\label{eq:Benamou-Brenier}
  W_2(\rho_0,\rho_1)^2=\min\left\{ \int_0^1 \! \|\frac{\partial\rho_t}{\partial t} \|^2_{-1,\rho_t}\,dt: \rho_t|_{t=0}=\rho_0 \text{ and } \rho_t|_{t=1}=\rho_1\right\}.
\end{equation}

Observe that, in approximation, any small perturbation $\rho_t$ from a $\rho\in \Ptwo(\RR^d)$ can be specified by a potential $p \in \mathcal{D}$ such that \eqref{eq:cont equation} holds with $\rho_0=\rho$ and $v=\nabla p$. Following \cite[Definition 9.36]{Feng2006}, for any $\Ff\colon\mathcal{P}(\RR^{d})\rightarrow [-\infty,+\infty]$, we write, if it exists, $\grad\Ff(\rho)$ for the unique element in $\mathcal{D}'$ such that for each $p \in \mathcal{D}$ and each $\rho_{(\cdot)}\colon [0,\infty)\rightarrow\mathcal{P}(\RR^{d})$ satisfying \eqref{eq:cont equation} with $\rho_0=\rho$ and $v=\nabla p$, we have
\begin{equation*}
  \lim_{t\rightarrow0^+}\frac{\Ff(\rho_{t})-\Ff(\rho)}{t}=\langle\grad\Ff(\rho),p\rangle.
\end{equation*}
Let $\Ff(\rho)=\mathcal{E}(\rho)+\mathcal{S}(\rho)$ be the free energy defined as in \eqref{eq:free energy}. By \cite[Theorem D.28]{Feng2006}, if $\Ff(\rho)<\infty$, then
\begin{equation*}
\grad\Ff(\rho)=-(\Delta\rho+\div(\rho\nabla\Psi)) ~~\text{in}~~ \mathcal{D}'(\RR^d).
\end{equation*}

The following functional plays a central role in this paper
\begin{equation}
\label{FisherI}
\|\Delta \rho\|^{2}_{-1,\rho} = \begin{cases}
\int_{\RR^{d}}\!\tfrac{ |\nabla\rho(x)|^{2}}{\rho(x)}\,dx             &\mbox{if}\hhh  \rho(dx)=\rho(x)\,dx \text{ and } \sqrt{\rho}\in H^{1}(\RR^d),       \\
\infty       &\text{otherwise},
       						\end{cases}
\end{equation}
where $\nabla\rho$ is the distributional derivative of $\rho$. This functional is also known as the \emph{Fisher information}.

We conclude this section with two results that we will need.
\begin{lemma}\cite[Th. 8.3.1]{Ambrosio2008}\label{ACcurve}
Let $\rho_{(\cdot)}\colon(0,\tau)\rightarrow \Pc(\RR^d)$ be a narrowly continuous curve and let $v_{(\cdot)}:(0,\tau)\to V$ be a vector field such that the continuity equation \eqref{eq:cont equation} holds. If
\begin{equation}
  \rho_0\in \Pc_2(\RR^d) ~\text{and}~ \int_0^\tau\big\|v_{t}\big\|^{2}_{L^2(\rho_{t})}dt <\infty\label{AGStheorem}
\end{equation}
then $\rho_t\in \Pc_2(\RR^d)$ for all $0<t<\tau$ and $\rho_{(\cdot)}$ is absolutely continuous in the Wasserstein sense.
\end{lemma}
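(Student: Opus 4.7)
The plan is to split the argument into two parts: first establish a pointwise bound on the second moment $\int\!|x|^{2}\,\rho_{t}(dx)$, and then derive Wasserstein absolute continuity by constructing explicit transport plans through the flow of a regularisation of $v_{t}$. Both ingredients rest on the same elementary idea of testing the continuity equation \eqref{eq:cont equation} against a suitable function $\phi$ and applying Cauchy--Schwarz in the form $\int\!\nabla\phi\cdot v_{t}\,d\rho_{t}\le\|\nabla\phi\|_{L^{2}(\rho_{t})}\|v_{t}\|_{L^{2}(\rho_{t})}$.

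For the moment bound I would choose smooth compactly supported cutoffs $\phi_{R}$ that increase pointwise to $|x|^{2}$ and satisfy $|\nabla\phi_{R}|^{2}\le 4\phi_{R}$ uniformly in $R$, for instance by rescaling a fixed smooth concave truncation of $s\mapsto\min(s,1)$. Testing \eqref{eq:cont equation} yields $\frac{d}{dt}\int\!\phi_{R}\,d\rho_{t}\le 2\|v_{t}\|_{L^{2}(\rho_{t})}\bigl(\int\!\phi_{R}\,d\rho_{t}\bigr)^{1/2}$, hence $\frac{d}{dt}\bigl(\int\!\phi_{R}\,d\rho_{t}\bigr)^{1/2}\le\|v_{t}\|_{L^{2}(\rho_{t})}$. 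Integrating in time, then sending $R\to\infty$ by monotone convergence, gives $\bigl(\int\!|x|^{2}\,d\rho_{t}\bigr)^{1/2}\le\bigl(\int\!|x|^{2}\,d\rho_{0}\bigr)^{1/2}+\int_{0}^{t}\|v_{s}\|_{L^{2}(\rho_{s})}\,ds$, which is finite thanks to \eqref{AGStheorem} and Cauchy--Schwarz on $(0,\tau)$; hence $\rho_{t}\in\Ptwo(\RR^{d})$ for every $t\in(0,\tau)$.

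Next I would prove Wasserstein absolute continuity by a flow-map argument. Mollify in space-time to obtain smooth approximations $(\rho_{t}^{\varepsilon},v_{t}^{\varepsilon})$ still satisfying the continuity equation, solve the ODE $\dot{X}_{u}^{\varepsilon}=v_{u}^{\varepsilon}(X_{u}^{\varepsilon})$ with initial condition $X_{s}^{\varepsilon}=\mathrm{id}$, and use uniqueness of smooth solutions of the continuity equation to identify $\rho_{u}^{\varepsilon}=(X_{u}^{\varepsilon})_{\#}\rho_{s}^{\varepsilon}$. Then $(\mathrm{id},X_{t}^{\varepsilon})_{\#}\rho_{s}^{\varepsilon}$ is an admissible coupling, so combining the pointwise estimate $|X_{t}^{\varepsilon}(x)-x|\le\int_{s}^{t}|v_{u}^{\varepsilon}(X_{u}^{\varepsilon}(x))|\,du$ with Minkowski's integral inequality and the push-forward identity produces $W_{2}(\rho_{s}^{\varepsilon},\rho_{t}^{\varepsilon})\le\int_{s}^{t}\|v_{u}^{\varepsilon}\|_{L^{2}(\rho_{u}^{\varepsilon})}\,du$. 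Passing to the limit $\varepsilon\to 0$ using narrow convergence, lower semicontinuity of $W_{2}$, and the fact that mollification does not increase the $L^{2}(\rho_{u})$-norm of the velocity gives $W_{2}(\rho_{s},\rho_{t})\le\int_{s}^{t}\|v_{u}\|_{L^{2}(\rho_{u})}\,du$, and the integrand lies in $L^{2}(0,\tau)\subset L^{1}_{\mathrm{loc}}(0,\tau)$ by hypothesis.

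The hard part will be making the mollification/passage-to-the-limit step rigorous: one needs to regularise in a way that respects the tangent-space constraint $v_{t}\in V$, keeps the continuity equation intact, and allows the $W_{2}$ estimate to be inherited at the limit despite the nonlinear dependence of $\|\cdot\|_{L^{2}(\rho_{u})}$ on $\rho_{u}$. This is essentially the content of the DiPerna--Lions/Ambrosio theory for weak solutions of the continuity equation, and once that machinery is invoked the construction above yields the result.
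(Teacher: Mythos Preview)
The paper does not give its own proof of this lemma; it simply cites \cite[Th.~8.3.1]{Ambrosio2008} and adds a remark that the AGS proof in fact yields the second-moment bound without assuming a priori that the curve lies in $\Ptwo(\RR^{d})$. Your two-step outline---a Gronwall-type moment estimate via testing against smooth truncations of $|x|^{2}$, followed by a flow-map construction through space-time mollification to obtain $W_{2}(\rho_{s},\rho_{t})\le\int_{s}^{t}\|v_{u}\|_{L^{2}(\rho_{u})}\,du$---is correct and is precisely the strategy carried out in the cited reference (see \cite[Lem.~8.1.2, Prop.~8.1.8, Th.~8.3.1]{Ambrosio2008}), so there is no substantive difference to compare.
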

\begin{remark}
We point out that the hypothesis in Lemma~\ref{ACcurve} requires a priori that the curve $\rho_{(\cdot)}$ lies in $\Pc_2(\RR^d)$, but the proof actually shows that the condition \eqref{AGStheorem} implies the whole curve to be in $\Pc_2(\RR^d)$ (and it is absolutely continuous in the Wasserstein sense).
\end{remark}

\begin{lemma}\label{chainrule}
Assume that $\rho_{(\cdot)}\colon (0,\tau)\to\Pc_2(\RR^d)$ is a Wasserstein-absolutely continuous curve.
\begin{enumerate}
\item If $\Psi\in C^2(\RR^d)$ is convex, bounded from below, and it satisfies the conditions
\begin{equation*}
  \mathcal{E}(\rho_{t})< \infty \hspace{2pt}\quad \forall t\in [0,\tau]\hspace{2pt} \text{ and } \int_0^\tau\int_{\RR^d}|\nabla \Psi(x)|^2\rho_t(x)\,dx\,dt<+\infty,
\end{equation*}
then $t\mapsto\mathcal{E}(\rho_t)$ is absolutely continuous.
\item If
\begin{equation*}
  \mathcal{S}(\rho_{t})< \infty\hspace{2pt}\quad \forall t\in [0,\tau]\hspace{2pt} \text{ and } \int_0^\tau\|\Delta \rho_{t}\|^2_{-1,\rho_{t}}\,dt<\infty,
\end{equation*}
then $t\mapsto\mathcal{S}(\rho_t)$ is absolutely continuous.

If the conditions in both parts are satisfied, then $\grad\Ff(\rho_t)$ exists and the following chain rule holds
\begin{equation}
  \frac{d}{dt} \Ff(\rho_t) = \left( \grad\Ff(\rho_t), \frac{\partial}{\partial t} \rho_t \right)_{-1,\rho_t}.\label{eq:chainrule}
\end{equation}
\end{enumerate}
\end{lemma}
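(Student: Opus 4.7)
The plan is to work with the continuity equation representation of the curve. Since $\rho_{(\cdot)}$ is Wasserstein-absolutely continuous, \cite[Thm.~8.3.1]{Ambrosio2008} provides a Borel velocity field $v_{(\cdot)}$ with $v_t \in V$ such that $\partial_t \rho_t + \div(\rho_t v_t) = 0$ in $\mathcal{D}'$ and $\int_0^\tau \|v_t\|^2_{L^2(\rho_t)}\, dt < \infty$. Both absolute continuity statements then reduce to testing this equation against $\Psi$ and $\log \rho_t$ respectively, and estimating by Cauchy-Schwarz.

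For part (1), since $\Psi$ is not a compactly supported test function, the plan is to introduce a sequence of cutoffs $\chi_n \in C_c^\infty(\RR^d)$ with $\chi_n \nearrow 1$ and $\nabla \chi_n \to 0$ locally uniformly, so that $\Psi \chi_n \in \mathcal{D}$. Testing the continuity equation against $\Psi \chi_n$ gives
\begin{equation*}
  \int \Psi \chi_n\, d\rho_{t_2} - \int \Psi \chi_n\, d\rho_{t_1} = \int_{t_1}^{t_2} \int \nabla(\Psi \chi_n) \cdot v_t\, d\rho_t\, dt,
\end{equation*}
which by Cauchy-Schwarz is bounded by $\int_{t_1}^{t_2} \|\nabla(\Psi \chi_n)\|_{L^2(\rho_t)} \|v_t\|_{L^2(\rho_t)}\, dt$. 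Passing $n \to \infty$ via dominated convergence, using $\int_0^\tau \int |\nabla \Psi|^2\, d\rho_t\, dt < \infty$ on the right and $\mathcal{E}(\rho_t) < \infty$ (with the lower bound on $\Psi$) on the left, one obtains absolute continuity of $t \mapsto \mathcal{E}(\rho_t)$ together with the quantitative estimate $|\mathcal{E}(\rho_{t_2}) - \mathcal{E}(\rho_{t_1})| \le \int_{t_1}^{t_2} \|\nabla \Psi\|_{L^2(\rho_t)} \|v_t\|_{L^2(\rho_t)}\, dt$.

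Part (2) is the main technical obstacle, since $\log \rho_t$ is neither bounded nor a test function. The approach is a standard mollification argument: set $\rho_t^\varepsilon := \rho_t * \eta_\varepsilon$ with $\eta_\varepsilon$ a mollifier; then $\rho_t^\varepsilon + \delta$ is smooth and strictly positive for $\delta > 0$, and $\rho_t^\varepsilon$ satisfies the continuity equation with velocity $v_t^\varepsilon := (\rho_t v_t) * \eta_\varepsilon / \rho_t^\varepsilon$. Testing against $\log(\rho_t^\varepsilon + \delta)$ is now rigorous and yields, after Cauchy-Schwarz, a bound of the form $\int_{t_1}^{t_2} \|\Delta \rho_t^\varepsilon\|_{-1,\rho_t^\varepsilon} \|v_t^\varepsilon\|_{L^2(\rho_t^\varepsilon)}\, dt$. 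Passing $\delta \to 0$ and then $\varepsilon \to 0$ using lower semicontinuity of the Fisher information under convolution, together with the $L^2(\rho_t)$-contractivity of the vector field mollification, yields $|\mathcal{S}(\rho_{t_2}) - \mathcal{S}(\rho_{t_1})| \le \int_{t_1}^{t_2} \|\Delta \rho_t\|_{-1,\rho_t} \|v_t\|_{L^2(\rho_t)}\, dt$; the right-hand side is $L^1(0,\tau)$ by hypothesis, giving absolute continuity.

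For the chain rule, combining (1) and (2) gives absolute continuity of $\Ff(\rho_t)$, and the derivations above identify the derivative as $\frac{d}{dt} \Ff(\rho_t) = \int \nabla(\log \rho_t + \Psi) \cdot v_t\, d\rho_t$ for a.e.\ $t$. The representation $\grad \Ff(\rho_t) = -\div(\rho_t \nabla(\log \rho_t + \Psi))$ from \cite[Thm.~D.28]{Feng2006} shows, via \eqref{eq:tangent space condition}, that $\grad \Ff(\rho_t)$ corresponds to the vector field $\nabla(\log \rho_t + \Psi)$; together with $\partial_t \rho_t$ corresponding to $v_t$, this identification yields the inner-product representation \eqref{eq:chainrule}. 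The hardest step is the entropy regularization in part (2): one must carefully ensure that both the $\delta$- and $\varepsilon$-limits preserve the Fisher-information bound sharply enough to close the $L^1$-estimate, which is where the interplay between smoothing, the weighted $L^2$ norms, and lower semicontinuity becomes delicate.
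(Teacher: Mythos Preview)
Your direct approach is sound, but it differs substantially from what the paper actually does. The paper's proof is a two-line reduction to \cite[Th.~10.3.18]{Ambrosio2008}: since $\mathcal{E}$ and $\mathcal{S}$ are narrowly lower semicontinuous and geodesically convex (the former because $\Psi$ is convex), the AGS chain-rule theorem applies once the integrability condition \cite[(10.1.17)]{Ambrosio2008} on the subdifferential is checked, and that condition is immediate from Cauchy--Schwarz together with the two hypotheses $\int_0^\tau\!\int|\nabla\Psi|^2\,d\rho_t\,dt<\infty$ and $\int_0^\tau\|\Delta\rho_t\|_{-1,\rho_t}^2\,dt<\infty$. No cutoffs, no mollification, no explicit testing of the continuity equation.

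Your route---testing against $\Psi\chi_n$ for $\mathcal{E}$ and mollifying then testing against $\log(\rho_t^\varepsilon+\delta)$ for $\mathcal{S}$---is essentially a hand-built instance of what the AGS machinery encapsulates, and it has the merit of being self-contained. One point you glide over: in part~(1) the cross term $\Psi\,\nabla\chi_n$ is not controlled by $\int|\nabla\Psi|^2\,d\rho_t$ alone; you need the convexity of $\Psi$ (via $\Psi(x)-\inf\Psi\le|\nabla\Psi(x)|\,|x-x_0|$) to bound $|\Psi|$ by $C(1+|x|\,|\nabla\Psi|)$ on the annulus $\{n\le|x|\le 2n\}$, after which the term does vanish. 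The paper's invocation of AGS hides exactly this kind of bookkeeping inside the geodesic-convexity hypothesis, which is why it can afford to be so brief.
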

\begin{proof}
This Lemma is a direct consequence of \cite[Th. 10.3.18]{Ambrosio2008}. Since the functionals $\mathcal{E}(\rho)$ and $\mathcal{S}(\rho)$ are lower semicontinuous and geodesically convex, we only need to check condition \cite[10.1.17]{Ambrosio2008}. This condition in turn is satisfied by the Cauchy-Schwartz inequality $\langle f,g\rangle_{L^2(a,b)} \leq \|f\|_{{L^2(a,b)}}\|g\|_{L^2(a,b)}$ and the assumptions.
\end{proof}

\section{Particle system and conditional large deviations}
\label{sec:part system ldps}

In this section we first explain the concept of large deviations with a simple model particle system. Then, we introduce the dynamic particle system that we study more precisely, and discuss the large deviation principle for this system.

Consider a system of independent random particles in $\RR^d$ (without dynamics), where the positions $X_1,\hdots,X_N$ are identically distributed with law $\rho_0$. Then as a consequence of the law of large numbers $L_N\to\rho_0$ almost surely in the narrow topology as $N\to\infty$ \cite[Th. 11.4.1]{Dudley1989}. Naturally, this implies weak convergence:
\begin{equation*}
  \lim_{N\to\infty} \Prob(L_N\in C) = \delta_{\rho_0}(C)
\end{equation*}
for all continuity sets $C\subset\Pc(\RR^d)$ in the narrow topology. A large deviation principle quantifies the exponential rate of convergence to 0 (or 1). More precisely, we say \textit{the system satisfies a large deviation principle in $\Pc(\RR^d)$ with (unique) rate $J:\Pc(\RR^d)\to[0,\infty]$} if $J$ is lower semicontinuous, and for all sets $U\subset \Pc(\RR^d)$ there holds (see, for example \cite{Dembo1998})
\begin{equation*}
  -\inf_{U^\circ} J \leq \liminf_{N\to\infty} \frac1N\log \Prob(L_N\in U^\circ) \leq\limsup_{N\to\infty} \frac1N\log\Prob(L_N\in\overline{U}) \leq -\inf_{\overline{U}} J.
\end{equation*}
In addition, we say a rate functional is \textit{good} if it has compact sub-level sets. By Sanov's Theorem \cite[Th. 6.2.10]{Dembo1998}, our model example indeed satisfies a large deviation principle, where the good rate functional $J(\rho)$ is the relative entropy
\begin{equation}
\label{eq:relative entropy}
  \mathcal{H}(\rho|\rho^0):=\begin{cases} \int\!\log(\frac{d\rho}{d\rho^0})\,d\rho, &\text{if } \rho\ll\rho^0,\\
                                          \infty,                                   &\text{otherwise}.
                            \end{cases}
\end{equation}
In this example we see the (relative) entropy appearing naturally from a limit of a simple particle system.

Let us now consider our particle system with dynamics, and study its Sanov-type large deviations. To define the system more precisely, let $X_1(t),\cdots, X_N(t)$ be a sequence of independent random processes in $\RR^{d}$. Assume that the initial values are fixed deterministically by some $X_1(0)=x_1,\hdots X_N(0)=x_N$ in such a way that \footnote{The reason behind this specific initial condition is that we want to somehow condition on $L_N=\rho$, which is a measure-0 set.}
\begin{equation}
\label{eq:initial condition}
  L_N(0)\to\rho_0 \hspace{1.5cm} \text{narrowly for some given } \rho_0\in\Pc(\RR^{d}).
\end{equation}
The evolution of the system is prescribed by the same transition probability for each particle $\Prob(X_k(t)\in dy|X_k(0)=x)=p_t(dy|x)$. Naturally, for such probability there must hold $p_t(dy|x)\to \delta_x(dy)$ narrowly as $t\to0$, and it should evolve according to \eqref{eq:Fokker-Planck}. We thus define $p_t$ to be the fundamental solution of~\eqref{eq:Fokker-Planck} \footnote{Equivalently, we can define the dynamics of $X_1,\hdots,X_N$ by the It\={o} stochastic equations
\begin{equation*}
\label{model}
  dX_k(t)=-\nabla\Psi(X_k(t))\,dt + \sqrt{2}\,dW_k(t), \quad k=1,\cdots, N
\end{equation*}
where $W_1,\hdots, W_N$ are independent Wiener processes.}.

Again by the law of large numbers $L_N(\tau)\to\rho_\tau$ almost surely in $\Pc(\RR^d)$, where $\rho_\tau=\rho_0\ast p_\tau$, the solution of~\eqref{eq:Fokker-Planck} at time $\tau$ with initial condition $\rho_0$. In addition, the empirical measure $L_N(\tau)$ satisfies a large deviation principle
\begin{equation*}
  \Prob\left(L_N(\tau)\approx\rho\right) \sim \exp\left(-N J_\tau(\rho|\rho_0)\right) \hspace{1.5cm} \text{as } N\to\infty.
\end{equation*}
with good rate functional~\eqref{eq:quenched ldp}. Observe that $J_\tau(\,\cdot\,|\rho_0)\geq0$ is minimised by $\rho_0\ast p_\tau$.

\section{Large deviations of trajectories}
\label{sec:large deviations}
In this section we prove, under suitable assumptions for $\rho_{0}$ and $\Psi$, the equivalence of the rate functionals~\eqref{eq:quenched ldp} and \eqref{eq:contracted path ldp}. The latter form will be used to prove the main Gamma convergence theorem. First, the large deviations of the empirical process is derived. To this aim we will need to distinguish between two different types of potentials $\Psi$. Next, we transform these large deviation principles back to the large deviations of the empirical measure $L_N(\tau)$ by a contraction principle, and finally show that the resulting rate functionals are the same for both cases.

In the first case we consider potentials that satisfy the following
\begin{assumption}[The subquadratic case]\label{as:subquadratic} Let $\Psi \in C^2(\RR^{d})$ such that
  \begin{enumerate}
  \item $\Psi$ is bounded from below,
  \item there is a $C>0$ such that $|x||\nabla \Psi(x)|\leq C(1+|x|^{2})$ for all $x\in\RR^d$,
\item $\Psi$ is convex,
\item $\Delta\Psi$ is bounded.
\end{enumerate}
\end{assumption}
Note that the second assumption indeed implies $|\Psi(x)|\leq C(1+|x|^2)$. Under Assumption~\ref{as:subquadratic}, combined with initial condition~\eqref{eq:initial condition}, the empirical process $\{L_N(t)\}_{0\leq t \leq\tau}$ satisfies a large deviation principle in $C([0,\tau],\Pc(\RR^{d}))$ with good rate functional \cite[Th. 4.5]{Dawson1987}
\begin{equation}
\label{eq:ldp sub and super}
%\label{eq:ldp superquadratic}
\tilde{J}_\tau(\rho_{(\cdot)})=\begin{cases} \hhh \frac{1}{4}\int_0^\tau\|\frac{\partial \rho_{t}}{\partial t}-\Delta\rho_{t}-\div(\rho_{t}\nabla\Psi)\|^2_{-1,\rho_{t}}dt, & \mbox{if}\hhh \rho_{(\cdot)}\in \AC\Kl[0,\tau];\mathcal{D}'\Kr,\\
                   \infty, &\text{otherwise}.
                 \end{cases}
\end{equation}
It follows from a contraction principle \cite[Th. 4.2.1]{Dembo1998} and a change of variables $t\mapsto t\slash \tau$ that
\begin{equation}\label{eq:contracted ldp subquadratic}
   J_{\tau}(\rho|\rho_0)=\inf_{\rho_{(\cdot)}\in C(\rho_{0},\rho)}  \frac{1}{4\tau}\int_0^1\left\|\frac{\partial \rho_{t}}{\partial t}-\tau(\Delta\rho_{t}+\div(\rho_{t}\nabla\Psi))\right\|^2_{-1,\rho_{t}}\,dt.
\end{equation}

\begin{remark}The first assumption guarantees that the functional $\mathcal{E}:\mathcal{P}(\RR^{d})\to(-\infty,\infty\rbrack$ is well defined.
The last two assumptions are not necessary to derive \eqref{eq:ldp sub and super}; however we will need them in the sequel.
Especially the last one is a technical assumption that we will need in Lemma~\ref{lem:aux1}. It can be relaxed in several
ways, but for simplicity we chose not to.
\end{remark}

\begin{remark} In \eqref{eq:contracted ldp subquadratic} we implicitly set $\frac1{4\tau}\int_0^1\|\tfrac{\partial \rho_{t}}{\partial t}-\tau(\Delta\rho_{t}+\div(\rho_{t}\nabla\Psi))\|^2_{-1,\rho_{t}}\,dt=\infty$ if the curve is not absolutely continuous in distributional sense. Therefore, from now on, we shall only consider curves in $C(\rho_0,\rho)$ or $C_{W_2}(\rho_0,\rho)$ that are absolutely continuous in distributional sense.
\end{remark}

In the second case we require a combination of assumptions on $\Psi$ that were taken from \cite{Feng2006} and \cite{Feng2011}:
\begin{assumption}[The superquadratic case]\label{as:superquadratic} Let $\Psi \in C^4(\RR^{d})$ such that:
%  \vspace{-.7cm}
  \begin{enumerate}
    \item There is some $\lambda_\Psi \in \RR$ such that $z^T D^2\Psi(x)z\geq \lambda_\Psi\abs{z}^2$ for all $x,z\in\RR^d$;
    \item $\int_{\RR^{d}}\Psi(x) e^{-2\Psi(x)}\,dx < \infty$;
    \item $\Psi$ has superquadratic growth at infinity, i.e. $\lim_{|x|\rightarrow\infty}\frac{\Psi(x)}{|x|^2}=+\infty$;
    \item There exists an $\omega\in C(\RR_+)$ with $\omega(0)=0$ such that for all $x,y\in \RR^d$
      \begin{align*}
        \Psi(y)-\Psi(x)&\leq \omega(|y-x|)(1+\Psi(x)),
      \\|\Psi(y)-\Psi(x)|^2&\leq \omega(|y-x|)(1+|\nabla \Psi(x)|^2+\Psi(x));
      \end{align*}
    \item $\zeta := |\nabla\Psi|^{2}-2\Delta\Psi$ has superquadratic growth at infinity, i.e. $\lim_{|x|\rightarrow\infty}\frac{\zeta(x)}{|x|^2}=+\infty;$
    \item There is some $\lambda_\zeta \in \RR$ such that $z^T D^2\zeta(x)z\geq\lambda_\zeta\abs{z}^2$ for all $x,z\in\RR^d$.
  \end{enumerate}
\end{assumption}
Whenever Assumption~\ref{as:superquadratic} and initial condition \eqref{eq:initial condition} hold, then by \cite[Th. 13.37]{Feng2006} the process $\{L_N(t)\}_{0\leq t \leq \tau}$ satisfies a large deviation principle in $C_{W_{2}}([0,\tau],\Pc_{2}(\RR^{d}))$ with good rate functional~\eqref{eq:ldp sub and super}.

\begin{remark}
Contrary to the subquadratic case, the latter is actually a large deviation principle on the set of all continuous paths in $\Pc_{2}(\RR^{d})$ with respect to the Wasserstein topology. Although we strongly believe that this is also true for the subquadratic case, it is very difficult to prove due to the fact that the functional $\tilde{J}_{\tau}$  does not have Wasserstein-compact sub-level sets, and therefore it can't be a good rate functional in $C_{W_{2}}([0,\tau], \Pc_{2}(\RR^{d}))$ when $\Psi$ is subquadratic.
\end{remark}

Again, by a contraction principle and a simple change of variables, it follows from \eqref{eq:ldp sub and super} that \eqref{eq:quenched ldp} must be equal to:
\begin{equation}\label{eq:contracted ldp superquadratic}
   J_{\tau}(\rho|\rho_0)=\inf_{\rho_{(\cdot)}\in C_{W_2}(\rho_{0},\rho)}  \frac{1}{4\tau}\int_0^1\left\|\frac{\partial \rho_{t}}{\partial t}-\tau(\Delta\rho_{t}+\div(\rho_{t}\nabla\Psi))\right\|^2_{-1,\rho_{t}}\,dt.
\end{equation}

Observe that in this case the infimum is taken over Wasserstein-continuous curves, while in the subquadratic case \eqref{eq:contracted ldp subquadratic} the infimum was over narrowly continuous curves. However, we will prove that under the extra assumption that $\rho_{0}\in\mathcal{P}_{2}(\RR^{d})$ and $\mathcal{F}(\rho_{0})$ is finite, even in the subquadratic case the infimum can be taken over $ C_{W_2}(\rho_{0},\rho) $. Actually, we will prove something even stronger, that we will need in the sequel, namely the following:
\begin{proposition}\label{exp}
Let $\Psi\in C^2(\RR^d)$ satisfy Assumption~\ref{as:subquadratic}. Let $\rho_{0} \in \mathcal{P}_{2}(\RR^d)$ with $\mathcal{F}(\rho_{0})<\infty$, and assume $\rho_{(\cdot)}\in C(\rho_0,\rho)$ with $\tilde{J}_\tau(\rho_{(\cdot)})$ finite. Then $\rho_{t} \in \Pc_{2}(\RR^{d})$ for
every $t$. Furthermore, the curve $\rho_{(\cdot)}$ lies in $\AC_{W_{2}} \Kl [0,1];\Ptwo(\RR^{d})\Kr$ and $\mathcal{F}(\rho_{t})$ is absolutely continuous with respect to $t$. Finally there holds:
\begin{multline*}
\frac{1}{4\tau}\int_{0}^{1}\left\|\frac{\partial\rho_{t}}{\partial t}-\tau(\Delta\rho_{t}+\div(\rho_{t}\nabla\Psi))\right\|^{2}_{-1,\rho_{t}}\,dt \\=
\frac{1}{4\tau}\int_{0}^{1}\left\|\frac{\partial\rho_{t}}{\partial t}\right\|^{2}_{-1,\rho_{t}}dt + \frac{\tau}{4}\int_{0}^{1}\left\|\grad \mathcal{\Ff}(\rho_t)\right\|^{2}_{-1,\rho_{t}}\,dt +\frac{1}{2}\mathcal{F}(\rho_{1})-\frac{1}{2}\mathcal{F}(\rho_{0}).
\end{multline*}
\end{proposition}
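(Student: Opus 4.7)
The strategy is to expand the square in the definition of $\tilde{J}_\tau$ and recognize the cross term as $\frac{d}{dt}\mathcal{F}(\rho_t)$ via the chain rule. Using the identification $\grad\mathcal{F}(\rho) = -(\Delta\rho + \div(\rho\nabla\Psi))$, the integrand becomes $\|\partial_t\rho_t + \tau\grad\mathcal{F}(\rho_t)\|^2_{-1,\rho_t}$. The pointwise expansion
$$\tfrac{1}{4\tau}\|\partial_t\rho_t + \tau\grad\mathcal{F}(\rho_t)\|^2_{-1,\rho_t} = \tfrac{1}{4\tau}\|\partial_t\rho_t\|^2_{-1,\rho_t} + \tfrac{1}{2}(\partial_t\rho_t,\grad\mathcal{F}(\rho_t))_{-1,\rho_t} + \tfrac{\tau}{4}\|\grad\mathcal{F}(\rho_t)\|^2_{-1,\rho_t},$$
followed by integration over $[0,1]$ and the chain rule \eqref{eq:chainrule} to identify $\int_0^1 (\partial_t\rho_t,\grad\mathcal{F}(\rho_t))_{-1,\rho_t}\,dt = \mathcal{F}(\rho_1)-\mathcal{F}(\rho_0)$, immediately produces the claimed identity. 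All the work is in justifying this formal manipulation: showing each of the three terms is individually finite, that the curve is $\Pc_2$-valued and Wasserstein-absolutely continuous, and that the chain rule applies.

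\emph{Step 1 (velocity field).} From finiteness of $\tilde{J}_\tau(\rho_{(\cdot)})$ and the duality formula \eqref{def:-1 norm}, I would first extract a Borel vector field $w_{(\cdot)}$ such that $\partial_t\rho_t + \tau\grad\mathcal{F}(\rho_t) = -\div(\rho_t w_t)$ in $\mathcal{D}'$ with $\int_0^1\|w_t\|^2_{L^2(\rho_t)}\,dt<\infty$. Combined with the subquadratic growth $|\nabla\Psi|^2 \leq C(1+|x|^2)$ from Assumption~\ref{as:subquadratic} and propagation of the second moment (obtained by testing the continuity equation against $|x|^2$, suitably truncated, and Gronwall's lemma), this would promote $\partial_t\rho_t$ itself to an admissible continuity-equation velocity field with finite $L^2(\rho_t)$-norm, \emph{provided} the Fisher information integral $\int_0^1\|\Delta\rho_t\|^2_{-1,\rho_t}\,dt$ is finite.

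\emph{Step 2 (Fisher-information integrability).} This is the main obstacle: $\mathcal{F}(\rho_0)$ is finite but no initial Fisher bound is assumed. I would therefore derive the Fisher integrability by a mollification argument: regularise $\rho_{(\cdot)}$ in space by convolution with a standard mollifier $\eta_\varepsilon$, run the expansion-and-chain-rule computation for the regularised curve where all quantities are smooth and finite, and pass to the limit $\varepsilon\to 0$ by exploiting narrow lower semicontinuity of the Fisher information and of $\|\cdot\|_{-1,\rho}$ with respect to the base measure. A lower bound on $\mathcal{F}(\rho^\varepsilon_t)$ along the curve -- coming from $\Psi$ bounded below together with the entropy--second moment inequality, available once Step~1 provides uniform second moments -- converts finiteness of $\tilde{J}_\tau$ into uniform-in-$\varepsilon$ bounds on both $\int_0^1\|\partial_t\rho^\varepsilon_t\|^2_{-1,\rho^\varepsilon_t}\,dt$ and $\int_0^1\|\grad\mathcal{F}(\rho^\varepsilon_t)\|^2_{-1,\rho^\varepsilon_t}\,dt$.

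\emph{Step 3 (conclusion).} Once these three integrability facts are in place, Lemma~\ref{ACcurve} (together with its Remark) yields $\rho_t\in\Pc_2(\RR^d)$ for every $t$ and $\rho_{(\cdot)}\in\AC_{W_2}([0,1];\Pc_2(\RR^d))$. Since $\Psi$ is convex and bounded from below by Assumption~\ref{as:subquadratic}, the hypotheses of both parts of Lemma~\ref{chainrule} are met, giving absolute continuity of $t\mapsto\mathcal{F}(\rho_t)$ together with the chain rule \eqref{eq:chainrule}. Integrating the pointwise expansion above and applying the chain rule to the cross term then gives the stated identity.
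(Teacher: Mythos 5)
Your overall strategy -- expand the square, identify the cross term as $\tfrac{d}{dt}\mathcal{F}(\rho_t)$ via Lemma~\ref{chainrule}, and justify the expansion by mollification -- is exactly the paper's strategy, and your Step~3 matches the paper verbatim. Your Step~1, however, follows a genuinely different route than the paper's Lemma~\ref{lem:aux1}: the paper tests the rate functional against $\eta_k\Psi$ with a scaled bump function $\eta_k$, exploiting a sign cancellation from the $\div(\rho\nabla\Psi)$ cross-term to directly extract $\int_0^1\int|\nabla\Psi|^2\,d\rho_t\,dt<\infty$ (and $\sup_t\int|\Psi|\,d\rho_t<\infty$), whereas you propose to propagate the second moment by testing against a truncated $|x|^2$ and applying Gronwall, then deduce the $|\nabla\Psi|^2$ bound from the subquadratic growth. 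Both work under Assumption~\ref{as:subquadratic}; your route has the advantage of also producing the $\Pc_2$-valuedness of the curve as a byproduct, while the paper's test-function argument is what motivates the technical hypothesis that $\Delta\Psi$ be bounded.

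Your Step~2 is where the proposal becomes vague in a way that matters. Two points. First, the uniform-in-$\varepsilon$ upper bound on the mollified energy does \emph{not} follow from ``narrow lower semicontinuity of $\|\cdot\|_{-1,\rho}$''; semicontinuity only controls the $\liminf$ as $\varepsilon\to0$, not the mollified quantity itself. What is actually needed is the contraction of the quadratic velocity energy under convolution, i.e.\ \cite[Th.~8.1.9]{Ambrosio2008}: if $\partial_t\rho_t-\tau\Delta\rho_t=-\div(\rho_t v_t)$ and $\rho_{t,\varepsilon}=\rho_t\ast\theta_\varepsilon$, $v_{t,\varepsilon}=((\rho_t v_t)\ast\theta_\varepsilon)/\rho_{t,\varepsilon}$, then $\|v_{t,\varepsilon}\|_{L^2(\rho_{t,\varepsilon})}\leq\|v_t\|_{L^2(\rho_t)}$. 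This is the key lemma your proposal never names, and without it the mollified identity does not yield a bound that survives $\varepsilon\to0$. Second, you intend to derive uniform bounds on $\int_0^1\|\grad\mathcal{F}(\rho^\varepsilon_t)\|^2_{-1,\rho^\varepsilon_t}\,dt$ directly, but convolution commutes with $\Delta$ while it does \emph{not} commute with $\rho\mapsto\div(\rho\nabla\Psi)$; there is no identity expressing $\div(\rho_{t,\varepsilon}\nabla\Psi)$ as the mollification of $\div(\rho_t\nabla\Psi)$, so $\grad\mathcal{F}(\rho_{t,\varepsilon})$ is not related in a useful way to $\grad\mathcal{F}(\rho_t)$. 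The paper avoids this exactly by first decoupling the drift via the Step~1 estimate and running the mollification argument for $\mathcal{S}$ alone (with $\grad\mathcal{S}=-\Delta\rho$), only reassembling $\mathcal{F}$ at the very end after $\int_0^1\|\Delta\rho_t\|^2_{-1,\rho_t}\,dt<\infty$ has been established. You had already decoupled in Step~1, so switching back to $\mathcal{F}$ in Step~2 is an inconsistency that should be corrected; once these two points are fixed the argument closes as in the paper.
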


Before we prove this theorem we prove two auxiliary lemmas.
\begin{lemma}\label{lem:aux1} Assume that
\begin{enumerate}
\item $\Psi\in C^2(\RR^d)$ satisfies Assumption~\ref{as:subquadratic},
\item $\int\Psi\rho_0(dx)<\infty$,
\item $\rho_{(\cdot)} \in C(\rho_0,\rho)$,
\item $\tilde{J}_\tau(\rho_{(\cdot)})<\infty$.
\end{enumerate}
Then
\begin{equation}
\int_{0}^{\tau}\int_{\RR^d}|\nabla\Psi(x)|^{2}\rho_{t}(dx)\,dt<\infty. \label{eq:finiteNablaPsi}
\end{equation}
\end{lemma}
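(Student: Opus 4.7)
The plan is to first establish the uniform second-moment bound
\[
\sup_{t\in[0,\tau]}\int_{\RR^d} |x|^2\,\rho_t(dx) < \infty,
\]
and then deduce \eqref{eq:finiteNablaPsi} from the growth bound $|\nabla\Psi(x)|^2 \le C'(1+|x|^2)$, which follows from Assumption~\ref{as:subquadratic}(2) together with continuity of $\nabla\Psi$ near the origin (for $|x|\ge 1$, $|\nabla\Psi(x)|\le C(1+|x|^2)/|x|\le 2C|x|$; for $|x|\le 1$ use $C^2$-smoothness).

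First, finiteness of $\tilde J_\tau(\rho_{(\cdot)})$ and the characterisation \eqref{def:-1 norm} of the $-1,\rho$ norm yields a Borel family $w_t\in V\subset L^2(\rho_t)$, for a.e.\ $t\in[0,\tau]$, satisfying
\[
\partial_t\rho_t - \Delta\rho_t - \div(\rho_t\nabla\Psi) = -\div(\rho_t w_t) \quad \text{in }\mathcal{D}'(\RR^d),
\]
with $\int_0^\tau \|w_t\|_{L^2(\rho_t)}^2\,dt = 8\,\tilde J_\tau(\rho_{(\cdot)})<\infty$. Next I construct cutoffs approximating $|x|^2$: pick a smooth, non-decreasing, concave $F_R\colon[0,\infty)\to[0,\infty)$ with $F_R(s)=s$ on $[0,R^2]$ and $F_R\nearrow F_R(\infty)<\infty$, and set $\phi_R(x):=F_R(|x|^2)$, further localized by $\xi_M\in C_c^\infty$ so that $\phi_R\xi_M\in\mathcal{D}$. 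The key pointwise estimates are
\begin{align*}
|\nabla\phi_R|^2 &= 4\bigl(F_R'(|x|^2)\bigr)^2 |x|^2 \le 4F_R(|x|^2)=4\phi_R,\\
\Delta\phi_R &= 4|x|^2 F_R''(|x|^2) + 2dF_R'(|x|^2) \le 2d,\\
\nabla\phi_R\cdot\nabla\Psi &= 2F_R'(|x|^2)\,x\cdot\nabla\Psi(x) \ge 2(\inf\Psi - \Psi(0)),
\end{align*}
using respectively the tangent-line inequality $sF_R'(s)\le F_R(s)$ (with $F_R'\le 1$); the concavity $F_R''\le 0$; and the convexity of $\Psi$ giving $x\cdot\nabla\Psi(x)\ge\Psi(x)-\Psi(0)\ge\inf\Psi-\Psi(0)$, combined with $F_R'\in[0,1]$ and $\inf\Psi-\Psi(0)\le 0$.

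Setting $f_R(t):=\int\phi_R\,d\rho_t$ and $g(t):=\|w_t\|_{L^2(\rho_t)}$, I test the distributional equation against $\phi_R\xi_M$ and let $M\to\infty$ by dominated convergence (Assumption~\ref{as:subquadratic}(2) bounds $F_R'(|x|^2)\,x\cdot\nabla\Psi$ uniformly via $|x||\nabla\Psi|\le C(1+|x|^2)$ and $sF_R'(s)\le F_R(s)$), to obtain the differential inequality
\[
f_R'(t) \le 2d + 2(\Psi(0)-\inf\Psi) + 2\sqrt{f_R(t)}\,g(t),
\]
where the last term comes from Cauchy--Schwarz on the $w_t$-term together with $|\nabla\phi_R|^2\le 4\phi_R$. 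The substitution $u(t):=\sqrt{1+f_R(t)}$ converts this into $u'(t)\le [d+\Psi(0)-\inf\Psi] + g(t)$; integrating gives a bound on $u$ uniform in $R$ and $t\in[0,\tau]$, since $f_R(0)\le\int|x|^2\rho_0 < \infty$ (because $\rho_0\in\Pc_2(\RR^d)$) and $\int_0^\tau g\,dt \le \sqrt{8\tau\,\tilde J_\tau(\rho_{(\cdot)})}<\infty$. Monotone convergence as $R\to\infty$ yields $\sup_t\int|x|^2\rho_t\le C<\infty$, and the growth bound on $\nabla\Psi$ then gives \eqref{eq:finiteNablaPsi}.

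The main technical obstacle is the localization step: since neither $|x|^2$ nor $\phi_R$ lies in $\mathcal D$, one must carefully justify the distributional pairing via $\xi_M$ and check that the upper bounds---in particular the convexity-based lower bound on $\nabla\phi_R\cdot\nabla\Psi$---survive the limit cleanly. The remaining ingredients (Cauchy--Schwarz, Gronwall, monotone convergence) are routine.
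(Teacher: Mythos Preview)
Your argument is essentially correct, but it relies on an assumption that is \emph{not} among the hypotheses of the lemma: you write ``$f_R(0)\le\int|x|^2\rho_0 < \infty$ (because $\rho_0\in\Pc_2(\RR^d)$)'', yet the lemma only assumes $\int\Psi\,d\rho_0<\infty$ and $\rho_{(\cdot)}\in C(\rho_0,\rho)$, which is a space of curves in $\Pc(\RR^d)$, not $\Pc_2(\RR^d)$. For a bounded convex $\Psi$ this gives no moment control. In the paper the lemma is only ever invoked inside Proposition~\ref{exp}, where $\rho_0\in\Pc_2(\RR^d)$ is assumed, so your proof suffices for the paper's purposes; but as a proof of Lemma~\ref{lem:aux1} as stated it has a gap. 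A clean fix within your framework is to replace the test function $\phi_R\approx|x|^2$ by a concave truncation of $\Psi-\inf\Psi$ (so that the initial value is controlled by $\int\Psi\,d\rho_0$ and the drift term produces $|\nabla\Psi|^2$ directly); this is in fact what the paper does.

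The paper's proof is genuinely different in strategy. Rather than extracting a velocity field $w_t$ and running a Gronwall/second-moment argument, it inserts the single test function $p=\eta_k\Psi$ (with $\eta_k$ a bump cutoff) into the variational formula~\eqref{def:-1 norm} for $\tilde J_\tau$. This yields in one stroke a lower bound for $\tilde J_\tau$ containing both $\sup_{t\le\delta}\int|\Psi|\,d\rho_t$ and $\int_0^\delta\!\int|\nabla\Psi|^2\,d\rho_t\,dt$, and after estimating the cross terms (using the subquadratic bound, convexity, and boundedness of $\Delta\Psi$) one closes the inequality for $\delta$ small. The trade-offs: the paper's approach works under the stated hypotheses without second-moment information on $\rho_0$, but the estimates are considerably more delicate; your approach is shorter and more conceptual (continuity equation, convexity to sign the drift, Cauchy--Schwarz, Gronwall), at the cost of the extra assumption $\rho_0\in\Pc_2(\RR^d)$. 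Your convexity estimate $\nabla\phi_R\cdot\nabla\Psi\ge 2(\inf\Psi-\Psi(0))$ is a nice touch that the paper does not exploit.
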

\begin{proof}
For simplicity we take $\tau=1$. We will prove the following statement: there exist $0<\delta\leq1$ and $\alpha,\beta>0$ that depend only on $\Psi$  such that
\begin{equation}
 \label{eq:finiteNablaPsi2}
  \alpha \sup_{t\in[0,\delta]} \int_{\RR^d}\!|\Psi|\,d\rho_t + \beta \int_0^\delta\int_{\RR^d}\! |\nabla\Psi|^2\,d\rho_t\,dt \leq 8\tilde{J}_1(\rho_{(\cdot)}) + \frac4e|\inf\Psi| + \frac2e \int_{\RR^d}\!\Psi\,d\rho_0 + \frac {2\delta} e\|\Delta\Psi\|_\infty. \\
\end{equation}
Obviously \eqref{eq:finiteNablaPsi} follows from \eqref{eq:finiteNablaPsi2} by repeating it $1/\delta$ times.

We will approximate $\Psi$ by a sequence of $C^2_{c}(\RR^d)$ functions which are allowed in the definition of the norm $\|\cdot\|_{-1}$. To account for the compact support we use the usual bump function:
\begin{equation*}
  \eta(x):=\begin{cases}
    \exp\left(\frac{-1}{1-|x|^2}\right),   &|x|\leq 1\\
    0,                                    &|x|>1.
  \end{cases}
\end{equation*}
Define $\eta_k(x):=\eta(x/k)$. Then the following estimates hold
\begin{align}
\label{eq:etak estimates}
  |\eta_k(x)|\leq 1/e,&&  |\nabla\eta_k(x)|\leq \frac1{k}&& \text{and} &&|\Delta\eta_k(x)|\leq\frac1{k^2}\|\Delta\eta\|_\infty<\infty.
\end{align}
Since $\eta_k\Psi\in \mathcal{D}$ the rate functional \eqref{eq:ldp sub and super} is bounded from below by
\begin{equation}
\label{eq:varphiK}
\begin{split}
  4\tilde{J}_1(\rho_{(\cdot)})&=\int_0^1 \sup_{p\in\mathcal{D}} \left(\langle\partial_t\rho_t-\Delta\rho_t-\div(\rho_t\nabla \Psi),p\rangle-\frac{1}{2}\int_{\RR^d}|\nabla p|^2d\rho_t\right)dt
\\&\geq \int_0^s\left(\langle\partial_t\rho_t-\Delta\rho_t-\div(\rho_t\nabla\Psi),\eta_k\Psi \rangle-\frac{1}{2}\int_{\RR^d}| \nabla(\eta_k\Psi)|^2d\rho_t\right)dt
\end{split}
\end{equation}
for any $s\in[0,1]$. We now estimate each term in the right-hand side of~\eqref{eq:varphiK}. For the first term, we have
\begin{equation}
  \int_0^s\!\langle\partial_t\rho_t,\eta_k\Psi\rangle\, dt=\int_{\RR^d}\!\eta_k\Psi\,d\rho_s-\int_{\RR^d}\!\eta_k\Psi\,d\rho_0 \geq \int_{\RR^d}\eta_k|\Psi|\,d\rho_s-\frac2e|\inf\Psi| -\int_{\RR^d}\eta_k\Psi\,d\rho_0.
\label{eq:part1}
\end{equation}
For the second part, we find
\begin{equation}
\begin{split}
-\int_0^s\langle \Delta\rho_t,\eta_k\Psi\rangle\, dt&=-\int_0^s\int_{\RR^d}\!\left( \Psi \Delta\eta_k + 2\nabla\eta_k\cdot\nabla\Psi+\eta_k\Delta\Psi\right) d\rho_t\,dt\\
  &\geq-\int_0^s\int_{\RR^d}\!\left(|\Delta\eta_k|\,|\Psi|+ |\nabla\eta_k|\,(|\nabla\Psi|^2+1)+\eta_k|\Delta\Psi| \right) d\rho_t\,dt\\
  &\mathop{\geq}^{\eqref{eq:etak estimates}}  -\int_0^s\int_{[-k,k]^d}\!\left( \frac1{k^2}\|\Delta\eta\|_\infty|\Psi|+\frac1k(|\nabla\Psi|^2+1)+\frac1e|\Delta\Psi| \right) d\rho_t\,dt\\
  &\geq-\frac s{k^2}\|\Delta\eta\|_\infty \sup_{t\in[0,s]} \int_{[-k,k]^d}\!|\Psi|\,d\rho_t-\frac1k\int_0^s\int_{[-k,k]^d}\!|\nabla\Psi|^2\,d\rho_t\,dt-\frac sk -\frac se\|\Delta\Psi\|_\infty.
\label{eq:part2}
\end{split}
\end{equation}
Finally, for the last part
\begin{equation}
\label{eq:part3}
\begin{split}
&\int_0^s\left(\langle-\div(\rho_t\nabla\Psi),\eta_k\Psi\rangle-\frac{1}{2}\int_{\RR^d}|\nabla(\eta_k\Psi)|^2d\rho_t\right)dt
\\&\qquad=\int_0^s\int_{\RR^d}\left( -\frac12|\nabla\eta_k|^2\Psi^2 + (1-\eta_k)\nabla\eta_k\cdot\Psi\nabla\Psi + (1-\frac12\eta_k)\eta_k|\nabla\Psi|^2\right)d\rho_t\,dt
\\&\qquad\mathop{\geq}^{\eqref{eq:etak estimates}} \int_0^s\int_{[-k,k]^d}\left( -\frac1{2k^2} \Psi^2 -|\frac2k\Psi|\,|\frac12\nabla\Psi| + \frac34\eta_k|\nabla\Psi|^2 \right)d\rho_t\,dt,
\\&\qquad\geq \int_0^s\int_{[-k,k]^d}\left( -\frac5{2k^2} \Psi^2 + \left(\frac34\eta_k-\frac18\right)|\nabla\Psi|^2 \right)d\rho_t\,dt.
\\&\qquad\geq \int_0^s\int_{[-k,k]^d}\!\left( -\frac{5C(1+k^2)}{2k^2} |\Psi| + \left(\frac34\eta_k-\frac18\right)|\nabla\Psi|^2 \right)d\rho_t\,dt
\\&\qquad\geq -\frac{5sC(1+k^2)}{2k^2}\sup_{t\in[0,s]} \int_{[-k,k]^d}\!|\Psi|\,d\rho_t  + \int_0^s\int_{[-k,k]^d}\left( \left(\frac34\eta_k-\frac18\right)|\nabla\Psi|^2 \right)d\rho_t\,dt,
\end{split}
\end{equation}
where the fourth line follows from Young's inequality, and in the fifth line we used the subquadratic assumption. Substituting \eqref{eq:part1}, \eqref{eq:part2} and \eqref{eq:part3} into \eqref{eq:varphiK} we get
\begin{multline*}
  \int_{\RR^d}\!\eta_k|\Psi|\,d\rho_s
   + \int_0^s\int_{[-k,k]^d}\! \frac34\eta_k |\nabla\Psi|^2\,d\rho_t\,dt
   \leq 4\tilde{J}_1(\rho_{(\cdot)}) + \frac2e|\inf\Psi| + \int_{\RR^d}\!\eta_k\Psi\,d\rho_0 + \frac sk + \frac se\|\Delta\Psi\|_\infty \\
  + \left(\frac s{k^2}\|\Delta\eta\|_\infty + \frac{5sC(1+k^2)}{2k^2}\right) \sup_{t\in[0,s]} \int_{[-k,k]^d}\!|\Psi|\,d\rho_t
  +\left(\frac18+\frac1k\right)\int_0^s\int_{[-k,k]^d}\!|\nabla\Psi|^2\,d\rho_t\,dt.
\end{multline*}
If we first discard the first term on the left-hand side and maximise the equation over $s\in[0,\delta]$ for some $0<\delta\leq1$, then discard the second term and maximise, the sum of the inequalities can be written as
\begin{multline}
\label{eq:aux after sup t}
  \sup_{t\in[0,\delta]} \int_{\RR^d}\!\left(\eta_k-\frac {2\delta}{k^2}\|\Delta\eta\|_\infty - \frac{5\delta C(1+k^2)}{k^2}\right)|\Psi|\,d\rho_t
   + \int_0^\delta\int_{[-k,k]^d}\! \left(\frac34\eta_k-\frac14-\frac{2}k\right) |\nabla\Psi|^2\,d\rho_t\,dt \\
   \leq 8\tilde{J}_1(\rho_{(\cdot)}) + \frac4e|\inf\Psi| + 2\int_{\RR^d}\!\eta_k\Psi\,d\rho_0 + \frac {2\delta} k + \frac {2\delta} e\|\Delta\Psi\|_\infty.
\end{multline}
Taking the supremum over $k\geq1$, the inequality \eqref{eq:aux after sup t} becomes
\begin{equation*}
\begin{split}
  &\big(\underbrace{\frac1e-5\delta C}_{:=\alpha}\big) \sup_{t\in[0,\delta]} \int_{\RR^d}\!|\Psi|\,d\rho_t
   + \big(\underbrace{\frac3{4e}-\frac14}_{:=\beta}\big)\int_0^\delta\int_{\RR^d}\! |\nabla\Psi|^2\,d\rho_t\,dt \\
  &\hspace{6cm} \leq 8\tilde{J}_1(\rho_{(\cdot)}) + \frac4e|\inf\Psi| + 2\sup_k\left\{\int_{\RR^d}\!\eta_k\Psi\,d\rho_0\right\} + 2\delta + \frac {2\delta} e\|\Delta\Psi\|_\infty \\
  &\hspace{6cm} \leq 8\tilde{J}_1(\rho_{(\cdot)}) + \frac8e|\inf\Psi| + \frac2e \int_{\RR^d}\!\Psi\,d\rho_0 + 2\delta + \frac {2\delta} e\|\Delta\Psi\|_\infty, \\
\end{split}
\end{equation*}
as $\sup_k\int\!\eta_k\Psi\,d\rho_0 \leq \sup_k\int\!\eta_k|\Psi|\,d\rho_0 \leq 1/e\int\!|\Psi|\,d\rho_0 \leq 1/e \int\!(\Psi+2|\inf\Psi|)\,d\rho_0$. Take $\delta$ such that $\alpha>0$.
Now that we know that the suprema are finite, we can take the limit $k\to\infty$ of \eqref{eq:aux after sup t}, which proves \eqref{eq:finiteNablaPsi2}.
\end{proof}

The second auxiliary lemma is:
\begin{lemma}\label{Ibound}
Let $\epsilon>0$ and $\rho(x)\,dx\in\Pc(\RR^d)$ be given. Let $\theta(x):=\Kl\frac{1}{2\pi}\Kr^{\frac{d}{2}}e^{\frac{-|x|^2}{2}}$ be the density of
the d-dimensional normal distribution. We define $\theta_{\epsilon}(x):=\epsilon^{-d}\theta(\frac{x}{\epsilon})$ and
$\rho_{\epsilon}:=\rho*\theta_{\epsilon}$. Then there exists a constant $C_{\epsilon}$ that depends only on $\epsilon$ such that
$\|\Delta(\rho_{\epsilon})\|^2_{-1,\rho_{\epsilon}}<C_{\epsilon}$.
\end{lemma}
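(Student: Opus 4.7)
The plan is to compute $\|\Delta\rho_\epsilon\|_{-1,\rho_\epsilon}^2$ directly using its explicit formula \eqref{FisherI} as the Fisher information, and then exploit the smoothing effect of the Gaussian convolution to obtain a pointwise bound on $|\nabla\rho_\epsilon|^2/\rho_\epsilon$. Since $\rho_\epsilon=\rho\ast\theta_\epsilon$ is $C^\infty$ and strictly positive, $\nabla\rho_\epsilon$ is classical and I will verify $\sqrt{\rho_\epsilon}\in H^1(\RR^d)$ a posteriori.

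First I would write $\nabla\rho_\epsilon(x)=(\rho\ast\nabla\theta_\epsilon)(x)$ and use the elementary identity $\nabla\theta_\epsilon(z)=-\epsilon^{-2}z\,\theta_\epsilon(z)$, which gives
\begin{equation*}
  \nabla\rho_\epsilon(x)=-\frac{1}{\epsilon^{2}}\int_{\RR^{d}}(x-y)\,\theta_\epsilon(x-y)\,\rho(dy).
\end{equation*}
The Cauchy--Schwarz inequality applied to the measure $\theta_\epsilon(x-y)\,\rho(dy)$ then yields the pointwise estimate
\begin{equation*}
  |\nabla\rho_\epsilon(x)|^{2}\leq\frac{1}{\epsilon^{4}}\left(\int_{\RR^{d}}\theta_\epsilon(x-y)\,\rho(dy)\right)\left(\int_{\RR^{d}}|x-y|^{2}\theta_\epsilon(x-y)\,\rho(dy)\right)=\frac{\rho_\epsilon(x)}{\epsilon^{4}}\int_{\RR^{d}}|x-y|^{2}\theta_\epsilon(x-y)\,\rho(dy).
\end{equation*}

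Dividing by $\rho_\epsilon(x)$ (which is strictly positive everywhere) and integrating in $x$, I would apply Fubini and the translation $z=x-y$ to obtain
\begin{equation*}
  \int_{\RR^{d}}\frac{|\nabla\rho_\epsilon(x)|^{2}}{\rho_\epsilon(x)}\,dx\leq\frac{1}{\epsilon^{4}}\int_{\RR^{d}}\rho(dy)\int_{\RR^{d}}|z|^{2}\theta_\epsilon(z)\,dz=\frac{d}{\epsilon^{2}},
\end{equation*}
since $\int|z|^{2}\theta_\epsilon(z)\,dz=d\epsilon^{2}$ and $\rho$ is a probability measure. This finiteness guarantees $\sqrt{\rho_\epsilon}\in H^{1}(\RR^{d})$, so by \eqref{FisherI} we conclude $\|\Delta\rho_\epsilon\|_{-1,\rho_\epsilon}^{2}\leq d/\epsilon^{2}=:C_\epsilon$, which depends only on $\epsilon$ (and the dimension).

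The calculation is short and there is no real obstacle; the only thing to be careful about is that the Cauchy--Schwarz step must be applied with the correct weight so that one of the two factors reconstructs $\rho_\epsilon(x)$ itself, allowing the division by $\rho_\epsilon$ to cancel cleanly and leaving a double integral amenable to Fubini.
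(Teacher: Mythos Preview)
Your proof is correct and follows essentially the same route as the paper: compute $\nabla\rho_\epsilon$ via $\nabla\theta_\epsilon(z)=-\epsilon^{-2}z\,\theta_\epsilon(z)$, apply Cauchy--Schwarz against the weight $\theta_\epsilon(x-y)\,\rho(dy)$ so that one factor reproduces $\rho_\epsilon(x)$, then divide and use Fubini to obtain $C_\epsilon=\epsilon^{-4}\int|z|^2\theta_\epsilon(z)\,dz=d/\epsilon^2$. The only cosmetic difference is that the paper writes the convolution with the roles of $\rho$ and $\theta_\epsilon$ swapped and leaves $C_\epsilon$ unevaluated.
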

\begin{proof}
 We have $$\nabla\rho_{\epsilon}(x)=(\rho*\nabla \theta_{\epsilon})(x)=\int_{\RR^{d}}\rho(x-y)\nabla \theta_{\epsilon}(y)\,dy=-\epsilon^{-2}\int_{\RR^{d}}\rho(x-y)y\theta_{\epsilon}(y)\,dy.$$
Furthermore $$|\nabla\rho_{\epsilon}(x)|^{2}\leq \epsilon^{-4}\int_{\RR^{d}}\rho(x-y)|y|^{2}\theta_{\epsilon}(y)\,dy
\int_{\RR^{d}}\rho(x-y)\theta_{\epsilon}(y)\,dy\leq \epsilon^{-4}\rho_{\epsilon}(x)\int_{\RR^{d}}\rho(x-y)|y|^{2}\theta_{\epsilon}(y)\,dy.$$
Now
\begin{align*}
\|\Delta(\rho_{\epsilon})\|^2_{-1,\rho_{\epsilon}}=\int_{\RR^{d}}\!\frac{|\nabla \rho_{\epsilon}(x)|^{2}}{\rho_{\epsilon}(x)}\,dx&\leq \epsilon^{-4}\int_{\RR^{d}}\int_{\RR^{d}}\rho(x-y)|y|^{2}\theta_{\epsilon}(y)\,dy\,dx
\\&=\epsilon^{-4}\int_{\RR^{d}}\int_{\RR^{d}}\rho(x-y)\,dx\,|y|^{2}\theta_{\epsilon}(y)\,dy
\\&\leq \epsilon^{-4} \int_{\RR^{d}}|y|^{2}\theta_{\epsilon}(y)\,dy:=C_\epsilon.
\end{align*}
\end{proof}

We are now ready to proceed with the
\begin{proof}[Proof of Proposition ~\ref{exp}]
 Let $\rho_{(\cdot)}$ satisfy the assumptions (of Proposition~\ref{exp}). By Lemma~\ref{lem:aux1} we have $$\int_{0}^{1}\int_{\RR^{d}}|\nabla\Psi(x)|^{2}\rho_{t}(dx)\,dt<\infty$$ and therefore
\begin{multline*}
  \frac{1}{4\tau}\int_{0}^{1}\|\frac{\partial\rho_{t}}{\partial t}-\tau\Delta\rho_{t}\|^{2}_{-1,\rho_{t}}\,dt <\frac{1}{2\tau}\int_{0}^{1}\left\|\frac{\partial\rho_{t}}{\partial t}-
\tau(\Delta\rho_{t}+\div(\rho_{t}\nabla \Psi))\right\|^{2}_{-1,\rho_{t}}\,dt\\
+\frac{\tau}{2}\int_{0}^{1}\int_{\RR^{d}}|\nabla\Psi|^{2}\rho_{t}(dx)\,dt<\infty.
\end{multline*}
Take a $0<s\leq 1$. Since
\begin{equation}\label{grad}
 \frac{1}{4\tau}\int_{0}^{s}\left\|\frac{\partial\rho_{t}}{\partial t}-\tau\Delta\rho_{t}\right\|^{2}_{-1,\rho_{t}}\,dt<\infty
\end{equation}
we have that $\|\frac{\partial\rho_{t}}{\partial t}-\tau\Delta\rho_{t}\|^{2}_{-1,\rho_{t}}<\infty$ for almost every $t$. By \cite[Lem. D.34]{Feng2006} there is a $v_{t}\in L^{2}(\rho_{t})$ such that $$\frac{\partial\rho_{t}}{\partial t}-\tau\Delta\rho_{t}=-\div(v_{t}\,\rho_{t})$$ in distributional
sense. Take $\theta_{\epsilon}(x)$ as in Lemma \ref{Ibound}. Then we have  $$\frac{\partial\rho_{t,\epsilon}}{\partial t}-\tau\Delta\rho_{t,\epsilon}=-\div(v_{t,\epsilon}\,\rho_{t,\epsilon}),$$ where
 $$\rho_{t,\epsilon}=\rho_{t}*\theta_{\epsilon}(x),\hhhh v_{t,\epsilon}=\frac{(v_{t}\,\rho_{t})*\theta_{\epsilon}(x)}{\rho_{t,\epsilon}}.$$

By \cite[Th. 8.1.9]{Ambrosio2008} we have
\begin{multline}
\label{eq:AGS approximation}
\frac{1}{4\tau}\int_{0}^{s}\!\big\|\frac{\partial\rho_{t,\epsilon}}{\partial t}-\tau\Delta\rho_{t,\epsilon}\big\|^{2}_{-1,\rho_{t}}\,dt=
\frac{1}{4\tau}\int_{0}^{s}\!\|v_{t,\epsilon}\|^{2}_{L^{2}(\rho_{t,\epsilon})}\,dt
\leq\frac{1}{4\tau}\int_{0}^{s}\!\|v_{t}\|^{2}_{L^{2}(\rho_{t})}\,dt=\\\frac{1}{4\tau}\int_{0}^{s}\left\|\frac{\partial\rho_{t}}{\partial t}-\tau\Delta\rho_{t}\right\|^{2}_{-1,\rho_{t}}\,dt.
\end{multline}

Furthermore by Lemma~\ref{Ibound} we have that
\begin{equation}\label{eq:finiteDelta}
\int_{0}^{s}\!\|\Delta\rho_{t,\epsilon}\|^{2}_{-1,\rho_{t,\epsilon}}\,dt\leq C_{\epsilon},
\end{equation}
and therefore
\begin{equation}\label{eq:finiteFisher2}
\int_{0}^{s}\big\|\frac{\partial\rho_{t,\epsilon}}{\partial t}\big\|^{2}_{-1,\rho_{t,\epsilon}}\,dt<\infty.
\end{equation}
From \eqref{eq:finiteDelta} and since $\rho(0)\in \Pc_2(\RR^d)$, by using \cite[Lem. D.34]{Feng2006} and Lemma~\ref{ACcurve} we get that the curve $\rho_{t,\epsilon}$ is absolutely continuous in $\Pc_{2}(\RR^{d})$. In addition, it is a straightforward that $\mathcal{S}(\rho_{t,\epsilon})$ is finite for every $0<t\leq s$. From \eqref{eq:finiteDelta}, \eqref{eq:finiteFisher2} and by Lemma \ref{chainrule},
$\mathcal{S}(\rho_{t,\epsilon})$ is absolutely continuous with respect to $t$. Hence we obtain
\begin{align*}
&\frac{1}{4\tau}\int_{0}^{s}\big\|\frac{\partial\rho_{t,\epsilon}}{\partial t}+\tau \Delta\rho_{t,\epsilon}\big\|^{2}_{-1,\rho_{t}} dt
\\&=\frac{1}{4\tau}\int_{0}^{s}\big\|\frac{\partial\rho_{t,\epsilon}}{\partial t}\big\|^{2}_{-1,\rho_{t}} dt
+\frac{\tau}{4}\int_{0}^{s}\|\Delta\rho_{t,\epsilon}\|^{2}_{-1,\rho_{t}} dt
+\frac{1}{2}\int_{0}^{s} \left( \grad \mathcal{S}(\rho_{t,\epsilon}), \frac{\partial\rho_{t,\epsilon}}{\partial t}\right)_{-1,\rho_{t}}\,dt
\\&= \frac{1}{4\tau}\int_{0}^{s}\big\|\frac{\partial\rho_{t,\epsilon}}{\partial t}\big\|^{2}_{-1,\rho_{t}}dt
+\frac{\tau}{4}\int_{0}^{s}\|\Delta\rho_{t,\epsilon}\|^{2}_{-1,\rho_{t}}+\frac{1}{2}\mathcal{S}(\rho_{s,\epsilon})-\frac{1}{2}\mathcal{S}(\rho_{0,\epsilon}). \end{align*}
It follows that
\begin{equation*}
\frac{1}{4\tau}\int_{0}^{s}\big\|\frac{\partial\rho_{t,\epsilon}}{\partial t}\big\|^{2}_{-1,\rho_{t}}dt
+\frac{\tau}{4}\int_{0}^{s}\|\Delta\rho_{t,\epsilon}\|^{2}_{-1,\rho_{t}}dt+\frac{1}{2}\mathcal{S}(\rho_{s,\epsilon})-\frac{1}{2}\mathcal{S}(\rho_{0,\epsilon})
\leq \frac{1}{4\tau}\int_{0}^{1}\big\|\frac{\partial\rho_{t}}{\partial t}-\tau\Delta\rho_{t}\big\|^{2}_{-1,\rho_{t}}dt.
\end{equation*}
Now letting $\epsilon$ go to zero and by the lower semicontinuity of the entropy and the Fisher information functionals we get $\mathcal{S}(\rho_{s})<\infty $ and $ \int_{0}^{s}\|\Delta\rho_{t}\|^{2}_{-1,\rho_{t}}\,dt<\infty $. Therefore
\begin{equation*}
\int_0^s\big\|\frac{\partial\rho_t}{\partial t}\big\|^{2}_{-1,\rho_{t}}dt\leq 2\left(\int_0^s\big\|\frac{\partial\rho_{t}}{\partial t}-\tau\Delta\rho_{t}\big\|^{2}_{-1,\rho_{t}}dt+\tau^2 \int_{0}^{s}\|\Delta\rho_{t}\|^{2}_{-1,\rho_{t}}\,dt\right)<\infty.
\end{equation*}
and

\begin{equation*}
\int_0^s\big\|\Delta\rho_t+\div{\rho_t\nabla\Psi}\big\|^{2}_{-1,\rho_{t}}dt\leq 2\left(\int_{0}^{s}\|\Delta\rho_{t}\|^{2}_{-1,\rho_{t}}\,dt+\int_{0}^{s}\int_{\RR^d}|\nabla\Psi(x)|^{2}\rho_{t}(x)\,dx\,dt\right)<\infty.
\end{equation*}
By Lemma \ref{ACcurve} and Lemma \ref{chainrule} again, the curve $\rho_t$ is in $ AC_{W_{2}} \Kl [0,1];\Ptwo(\RR^{d})\Kr $. Moreover, $ t\mapsto\mathcal{F}(\rho_{t})$ is absolutely continuous and \eqref{eq:chainrule} holds.  Hence we have
\begin{multline*}
\frac{1}{4\tau}\int_{0}^{1}\left\|\frac{\partial\rho_{t}}{\partial t}-\tau(\Delta\rho_{t}+\div(\rho_{t}\nabla\Psi))\right\|^{2}_{-1,\rho_{t}}\,dt \\=
\frac{1}{4\tau}\int_{0}^{1}\left\|\frac{\partial\rho_{t}}{\partial t}\right\|^{2}_{-1,\rho_{t}}dt + \frac{\tau}{4}\int_{0}^{1}\left\|\Delta\rho_{t}+\div(\rho_{t}\nabla\Psi))\right\|^{2}_{-1,\rho_{t}}\,dt +\frac{1}{2}\mathcal{F}(\rho_{1})-\frac{1}{2}\mathcal{F}(\rho_{0}).
\end{multline*}
This finishes the proof of the Lemma.
\end{proof}
\begin{remark}
For the superquadratic case, the above lemma was proved by Feng and Nguyen in \cite{Feng2011} by using probabilistic tools. In addition, they obtain an estimate for the growth of $\mathcal{F}$ along the curves.
\end{remark}

Now the following is a straightforward result:
\begin{corollary} Let $\rho_{0}\in \Pc_{2}(\RR^{d})$ with $\mathcal{F}(\rho_{0})<\infty$. If $\Psi \in C^{2}(\RR^{d})$ satisfies either Assumption~\ref{as:subquadratic} or \ref{as:superquadratic}, then
\begin{equation*}
J_{\tau}(\rho|\rho_0)=\inf_{\rho_{(\cdot)}\in C_{W_2}(\rho_{0},\rho)}\frac{1}{4\tau} \int_0^1\big\|\frac{\partial \rho_{t}}{\partial t}-\tau(\Delta\rho_{t}+\div(\rho_{t}\nabla\Psi))\big\|^2_{-1,\rho_{t}}\,dt.
\end{equation*}
\end{corollary}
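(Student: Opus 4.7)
The statement combines the two representations \eqref{eq:contracted ldp subquadratic} and \eqref{eq:contracted ldp superquadratic} into a single formula where the infimum runs only over Wasserstein-continuous curves. Since the superquadratic case is already exactly of this form by \eqref{eq:contracted ldp superquadratic}, nothing needs to be done there. The whole content of the corollary is therefore in the subquadratic case, where \eqref{eq:contracted ldp subquadratic} gives the infimum over the larger class $C(\rho_0,\rho)$.

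The plan is to show the two infima agree by a sandwich argument. First, the trivial direction: $C_{W_2}(\rho_0,\rho)\subset C(\rho_0,\rho)$ (Wasserstein continuity implies narrow continuity in $\Pc_2(\RR^d)$), so restricting the infimum to the smaller class can only make it larger, giving
\[
\inf_{C(\rho_0,\rho)} \;\leq\; \inf_{C_{W_2}(\rho_0,\rho)}.
\]
Second, the non-trivial direction. If the left-hand infimum equals $+\infty$ there is nothing to prove, so assume it is finite. Then it suffices to show that every competitor $\rho_{(\cdot)}\in C(\rho_0,\rho)$ for which the functional is finite already lies in $C_{W_2}(\rho_0,\rho)$; that yields
\[
\inf_{C_{W_2}(\rho_0,\rho)} \;\leq\; \inf_{C(\rho_0,\rho)}.
\]
This last implication is precisely what Proposition~\ref{exp} provides: under the hypotheses $\rho_0\in\Pc_2(\RR^d)$ and $\mathcal{F}(\rho_0)<\infty$, together with Assumption~\ref{as:subquadratic} on $\Psi$, any narrowly continuous curve $\rho_{(\cdot)}$ with finite action $\tilde J_\tau(\rho_{(\cdot)})$ automatically belongs to $\AC_{W_2}([0,1];\Ptwo(\RR^d))$, and in particular to $C_{W_2}(\rho_0,\rho)$.

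Thus the corollary is an immediate consequence of \eqref{eq:contracted ldp subquadratic}, \eqref{eq:contracted ldp superquadratic}, and Proposition~\ref{exp}; the only slightly subtle point to mention is that we may assume the infimum is finite before invoking Proposition~\ref{exp}, since otherwise both infima are $+\infty$ and the identity is trivial. There is no real obstacle here — all of the analytic work (the regularisation with $\theta_\epsilon$, the chain rule, the bound on $\int\!|\nabla\Psi|^2\,d\rho_t\,dt$ from Lemma~\ref{lem:aux1}) has already been absorbed into the statement of Proposition~\ref{exp}.
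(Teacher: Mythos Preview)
Your argument is correct and is exactly the reasoning the paper intends: the superquadratic case is already \eqref{eq:contracted ldp superquadratic}, and in the subquadratic case the inclusion $C_{W_2}(\rho_0,\rho)\subset C(\rho_0,\rho)$ gives one inequality while Proposition~\ref{exp} forces every finite-action competitor in $C(\rho_0,\rho)$ to lie in $C_{W_2}(\rho_0,\rho)$, giving the other. The paper itself presents the corollary as a ``straightforward result'' of Proposition~\ref{exp} without writing out a proof, so your level of detail is if anything more than what appears there.
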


\section{Lower bound}
\label{sec:lower bound}
In this section we prove the lower bound of the Gamma convergence~\eqref{eq:main result} in our main result, Theorem~\ref{th:main result}.
\begin{theorem}[Lower bound]
\label{th:lower bound} Under the assumptions of Theorem~\ref{th:main result}, we have for any $\rho_1 \in \Pc_2(\RR^d)$ and all sequences $\rho_1^\tau\in\Ptwo(\RR^d)$ narrowly converging to $\rho_1$
\begin{equation}
\label{eq:lower bound2}
    \liminf_{\tau\rightarrow 0} \left(J_{\tau}(\rho_1^\tau|\rho_0)-\frac{W_{2}^{2}(\rho_{0},\rho_1^\tau)}{4\tau}\right) \geq \frac{1}{2}\mathcal{F}(\rho_1)-\frac{1}{2}\mathcal{F}(\rho_{0}).
\end{equation}
\end{theorem}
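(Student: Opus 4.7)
The plan is to exploit the alternative representation of $J_\tau$ established in Proposition~\ref{exp} (and its corollary). Since $\rho_0\in\Ptwo(\RR^d)$ with $\mathcal{F}(\rho_0)<\infty$, for every curve with finite action the integrand splits cleanly, and we obtain
\[
J_\tau(\rho_1^\tau\mid\rho_0)=\inf_{\rho_{(\cdot)}\in C_{W_2}(\rho_0,\rho_1^\tau)}\left\{\frac{1}{4\tau}\int_0^1\|\partial_t\rho_t\|^2_{-1,\rho_t}\,dt+\frac{\tau}{4}\int_0^1\|\grad\mathcal{F}(\rho_t)\|^2_{-1,\rho_t}\,dt\right\}+\frac{1}{2}\mathcal{F}(\rho_1^\tau)-\frac{1}{2}\mathcal{F}(\rho_0),
\]
which already isolates the desired boundary term. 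Thus the argument reduces to controlling the kinetic and Fisher-information-like pieces.

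First I would dispose of the trivial case by assuming $\liminf_\tau J_\tau(\rho_1^\tau\mid\rho_0)<\infty$ and passing to a subsequence realising the $\liminf$. For every admissible curve $\rho_{(\cdot)}$ connecting $\rho_0$ to $\rho_1^\tau$ the Benamou--Brenier formula \eqref{eq:Benamou-Brenier} yields
\[
\frac{1}{4\tau}\int_0^1\|\partial_t\rho_t\|^2_{-1,\rho_t}\,dt\;\geq\;\frac{W_2^2(\rho_0,\rho_1^\tau)}{4\tau},
\]
while the Fisher-information-like term $\tfrac{\tau}{4}\int_0^1\|\grad\mathcal{F}(\rho_t)\|^2_{-1,\rho_t}\,dt$ is non-negative and can be discarded. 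Taking the infimum over curves and then subtracting $W_2^2(\rho_0,\rho_1^\tau)/(4\tau)$ gives, for every $\tau>0$,
\[
J_\tau(\rho_1^\tau\mid\rho_0)-\frac{W_2^2(\rho_0,\rho_1^\tau)}{4\tau}\;\geq\;\frac{1}{2}\mathcal{F}(\rho_1^\tau)-\frac{1}{2}\mathcal{F}(\rho_0).
\]

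The remaining step is to pass to the $\liminf$ as $\tau\to 0$ using the narrow lower semicontinuity of $\mathcal{F}=\mathcal{S}+\mathcal{E}$. The entropy $\mathcal{S}$ is classically narrowly l.s.c. For the internal energy $\mathcal{E}(\rho)=\int\Psi\,d\rho$ I would use that $\Psi$ is continuous and bounded from below under either Assumption~\ref{as:subquadratic} (by hypothesis (1)) or Assumption~\ref{as:superquadratic} (where the $C^2$ superquadratic growth at infinity forces a lower bound); then the truncations $\Psi_M:=\min(\Psi,M)$ are bounded and continuous, so narrow convergence of $\rho_1^\tau$ yields $\int\Psi_M\,d\rho_1^\tau\to\int\Psi_M\,d\rho_1$, and letting $M\to\infty$ by monotone convergence gives $\liminf_\tau\mathcal{E}(\rho_1^\tau)\geq\mathcal{E}(\rho_1)$.

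The main non-trivial ingredient is Proposition~\ref{exp} itself, which has already been proven. Once the factorisation of the action is available the lower bound is almost immediate: the kinetic integral is discharged by Benamou--Brenier, the potential integral is thrown away by non-negativity, and only the narrow lower semicontinuity of $\mathcal{F}$ is needed at the end---an issue that the truncation argument handles cleanly. No tightness or compactness of the curves $\rho_{(\cdot)}^\tau$ is required, which is what makes the lower bound considerably easier than the recovery sequence treated in Section~\ref{sec:recovery sequence}.
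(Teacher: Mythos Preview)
Your proposal is correct and follows essentially the same route as the paper: invoke the splitting of the action from Proposition~\ref{exp} (together with \cite[Lem.~2.6]{Feng2011} in the superquadratic case, which you should cite explicitly since Proposition~\ref{exp} itself is stated only under Assumption~\ref{as:subquadratic}), discard the nonnegative Fisher term, bound the kinetic term below by $W_2^2/(4\tau)$ via Benamou--Brenier, and conclude by narrow lower semicontinuity of $\mathcal{F}$. The only cosmetic difference is that the paper selects an $\tau$-approximate minimising curve before applying the splitting, whereas you work directly at the level of the infimum; your added truncation argument for the lower semicontinuity of $\mathcal{E}$ is a welcome detail that the paper leaves implicit.
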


\begin{proof}
Take any sequence $\rho_1^\tau\in\Pc_2(\RR^{d})$ narrowly converging to a $\rho_1 \in \Pc_2(\RR^{d})$. We only need to consider those $\rho_1^\tau$ for which $J_{\tau}(\rho_1^\tau|\rho_0)< \infty$. For each such $\rho_1^\tau$, by the definition of infimum there exists a curve $\rho_t^\tau\in C(\rho_0,\rho_1^\tau)$ satisfying
\begin{equation}
\frac{1}{4\tau}\int_{0}^{1}\big\| \frac{\partial \rho_{t}^\tau}{\partial t}-\tau(\Delta\rho_{t}^\tau+\div(\rho_{t}^\tau\,\nabla\Psi)) \big\|^{2}_{-1,\rho_{t}^\tau}\,dt\leq J_\tau(\rho_1^\tau|\rho_0)+\tau < \infty.
\end{equation}
By Lemma~\ref{exp} for the subquadratic case and \cite[Lem. 2.6]{Feng2011} for the superquadratic
case, we have
\begin{align*}
J_\tau(\rho_1^\tau|\rho_0)+\tau &\geq\frac{1}{4\tau}\int_{0}^{1}\big\| \frac{\partial \rho_{t}^\tau}{\partial t}-\tau(\Delta\rho_{t}^\tau+\div(\rho_{t}^\tau\,\nabla\Psi))\big\|^{2}_{-1,\rho_{t}^\tau}\,dt
\\&=\frac{1}{4\tau}\int_{0}^{1}\big\| \frac{\partial \rho_{t}^\tau}{\partial t}+\tau\grad \mathcal{F}(\rho_t^\tau)) \big\|^{2}_{-1,\rho_{t}^\tau}\,dt
\\&=\frac{1}{4\tau}\left(\int_{0}^{1}\big\| \frac{\partial \rho_{t}^\tau}{\partial t}\big\|^2_{-1,\rho_{t}^\tau}\,dt+2\tau(\mathcal{F}(\rho_1^\tau)-\mathcal{F}(\rho_0))+\tau^2\int_{0}^{1}\|\grad \mathcal{F}(\rho_t^\tau)) \|^{2}_{-1,\rho_{t}^\tau}\,dt\right)
\\&=\frac{1}{2}(\mathcal{F}(\rho_1^\tau)-\mathcal{F}(\rho_0))+\frac{1}{4\tau}\int_{0}^{1}\big\|\frac{\partial \rho_{t}^\tau}{\partial t}\big\|^2_{-1,\rho_{t}^\tau}\,dt+\frac{\tau}{4}\int_{0}^{1}\|\grad \mathcal{F}(\rho_t^\tau)) \|^{2}_{-1,\rho_{t}^\tau}\,dt
\\&\geq\frac{1}{2}(\mathcal{F}(\rho_1^\tau)-\mathcal{F}(\rho_0))+\frac{1}{4\tau}\int_{0}^{1}\big\| \frac{\partial \rho_{t}^\tau}{\partial t}\big\|^2_{-1,\rho_{t}^\tau}\,dt
\\&\geq \frac{1}{2}(\mathcal{F}(\rho_1^\tau)-\mathcal{F}(\rho_0))+\frac{1}{4\tau}W_{2}^{2}(\rho_{0},\rho_1^\tau).
\end{align*}
In the last inequality above we have used the Benamou-Brenier formula for the Wasserstein distance \cite{Benamou2000}.
Finally, using $\rho_1^\tau\to\rho_1$ narrowly with the narrow lower semi-continuity of $\mathcal{F}$, we find that
\begin{equation*}
  \liminf_{\tau\rightarrow 0} \left(J_{\tau}(\rho_1^\tau|\rho_0)-\frac{W_{2}^{2}(\rho_{0},\rho_1^\tau)}{4\tau} \right)\geq \frac{1}{2}\mathcal{F}(\rho_1)-\frac{1}{2}\mathcal{F}(\rho_{0}).
\end{equation*}
\end{proof}

\section{Recovery sequence}
\label{sec:recovery sequence}

In this section we prove the upper bound of the Gamma convergence~\eqref{eq:main result}. This will conclude the proof of Theorem~\ref{th:main result}.
\begin{theorem}[Recovery sequence] \label{th:recovery sequence}
Under the assumptions of Theorem~\ref{th:main result}, for any $\rho_1\in\Ptwo(\RR)$ there exists a sequence $\rho_1^\tau\in\Ptwo(\RR)$ converging to $\rho_1$ in the Wasserstein metric such that
\begin{equation}
\label{eq:recovery sequence2}
    \limsup_{h\rightarrow 0}\left( J_{h}(\rho_1^\tau|\rho_0)-\frac{W_{2}^{2}(\rho_{0},\rho_1^\tau)}{4h}\right) \leq \frac{1}{2}S(\rho_1)-\frac{1}{2}S(\rho_{0}).
\end{equation}
\end{theorem}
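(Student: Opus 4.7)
The plan is to apply the equivalent representation of $J_\tau$ from Proposition~\ref{exp}: for any $\rho_{(\cdot)}\in C_{W_2}(\rho_0,\rho_1^\tau)$,
\begin{equation*}
J_\tau(\rho_1^\tau|\rho_0) \le \frac{1}{4\tau}\int_0^1\Bigl\|\frac{\partial\rho_t}{\partial t}\Bigr\|^2_{-1,\rho_t}dt + \frac{\tau}{4}\int_0^1\|\grad\mathcal{F}(\rho_t)\|^2_{-1,\rho_t}\,dt + \tfrac12\mathcal{F}(\rho_1^\tau) - \tfrac12\mathcal{F}(\rho_0).
\end{equation*}
Taking $\rho_{(\cdot)}^\tau$ to be the constant-speed Wasserstein geodesic from $\rho_0$ to $\rho_1^\tau$, the Benamou--Brenier formula \eqref{eq:Benamou-Brenier} makes the kinetic term exactly $W_2^2(\rho_0,\rho_1^\tau)/(4\tau)$, and the bound \eqref{eq:recovery sequence2} reduces to finding $\rho_1^\tau\to\rho_1$ in $W_2$ with $\limsup_\tau\mathcal{F}(\rho_1^\tau)\le \mathcal{F}(\rho_1)$ and
\begin{equation*}
\tau\int_0^1\|\grad\mathcal{F}(\rho_t^\tau)\|^2_{-1,\rho_t^\tau}\,dt\longrightarrow 0\qquad\text{as }\tau\to0.
\end{equation*}

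The naive choice $\rho_1^\tau\equiv\rho_1$ fails because along the monotone-rearrangement geodesic from $\rho_0$ to $\rho_1$ the density can vanish wherever the tails of $\rho_0$ and $\rho_1$ do not match; since $\|\grad\mathcal{F}(\rho_t)\|^2_{-1,\rho_t}$ dominates the Fisher information $\|\Delta\rho_t\|^2_{-1,\rho_t}$, such degeneracy would make the integral blow up. I would therefore construct $\rho_1^\tau$ so that, for small $\tau$, it coincides with $\rho_0$ outside a large interval $[-R_\tau,R_\tau]$, $R_\tau\to\infty$, while approximating $\rho_1$ inside.

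I would carry this out in two steps. First, reduce to $\mathcal{F}(\rho_1)<\infty$ and approximate $\rho_1$ in $W_2$ by measures with smooth, compactly supported density bounded below by a positive constant on their support (mollification plus truncation), checking $\mathcal{F}$-convergence at each stage. Second, exploiting the rigidity of one-dimensional optimal transport (monotone rearrangement), glue the tails of the approximation to those of $\rho_0$ outside $[-R_\tau,R_\tau]$, renormalising so the total mass equals one. The consequences are: (i) the monotone transport map from $\rho_0$ to $\rho_1^\tau$ is the identity outside $[-R_\tau,R_\tau]$, so $\rho_t^\tau\equiv\rho_0$ on the tails for every $t\in[0,1]$; (ii) on $[-R_\tau,R_\tau]$ the monotone structure combined with the positive lower bound on $\rho_0$ on compacts gives a uniform lower bound on $\rho_t^\tau$, from which one can explicitly control $\|\Delta\rho_t^\tau\|_{-1,\rho_t^\tau}$ using the formula $\rho_t^\tau(T_t(x))T_t'(x)=\rho_0(x)$; (iii) together with the assumptions on $\Psi$ and on $\rho_0$, this yields a uniform bound on $\int_0^1\|\grad\mathcal{F}(\rho_t^\tau)\|^2_{-1,\rho_t^\tau}\,dt$, so the prefactor $\tau$ drives it to zero.

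The hard part is the second step: simultaneously ensuring $W_2(\rho_1^\tau,\rho_1)\to0$ (which requires second-moment convergence), $\limsup\mathcal{F}(\rho_1^\tau)\le\mathcal{F}(\rho_1)$, and uniform Fisher-information bounds along the \emph{entire} geodesic. The first two are continuity requirements for the approximation; the third is what exploits one-dimensionality in an essential way, since only the monotone rearrangement allows one to enforce that the optimal map becomes the identity outside a compact set. This is precisely the step whose higher-dimensional analogue the authors flag as open.
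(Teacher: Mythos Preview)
Your proposal is correct and matches the paper's approach: test the variational formula for $J_\tau$ with the Wasserstein geodesic, approximate $\rho_1$ by measures that coincide with $\rho_0$ outside a compact interval (so the one-dimensional monotone map is the identity on the tails), and control the Fisher information and $\int|\nabla\Psi|^2\rho_t$ uniformly along the geodesic. The paper packages this via an explicit diagonal argument (Proposition~\ref{th:dense recovery}) that separates pointwise convergence on the dense class $Q(\rho_0)$ (Lemma~\ref{th:uniformly bounded}) from the $\mathcal{F}$-continuous approximation of $\rho_1$ by elements of $Q(\rho_0)$; this is exactly what your construction with $R_\tau\to\infty$ needs in order to be made rigorous, since the Fisher bound along the geodesic depends on $R_\tau$.
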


As mentioned in Section~\ref{sec:introduction}, our approach for the recovery sequence only works for $d=1$. Hence throughout this section, we will consider $d=1$.

The existence of the recovery sequence is proven by making use of the following denseness argument, which is also interesting in its own\footnote{A more or less similar idea can be found in \cite[Remark 1.29]{Braides2002}; Proposition~\ref{th:dense recovery} is slightly stronger.}:
\begin{proposition}\label{th:dense recovery}
Let $(X,d)$ be a metric space and let $Q$ be a dense subset of $X$. If $\{K_{n}, n \in \NN\}$ and $K_\infty$ are functions from $X$ to $\RR$ such that:
\begin{enumerate}[(a)]
  \item $K_{n}(q)\rightarrow K_{\infty}(q)$ for all $q\in Q$,
  \item for every $x\in X$ there exists a sequence $q_{n}\in Q$ with $q_{n}\rightarrow x$ and $K_{\infty}(q_{n})\rightarrow K_{\infty}(x)$,
\end{enumerate}
then for every $x\in X$ there exists a sequence $r_{n}\in Q$, with $r_{n}\rightarrow x$ such that $K_{n}(r_{n})\rightarrow K_{\infty}(x)$.
\end{proposition}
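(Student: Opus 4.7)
The plan is to carry out a standard Attouch-type diagonal extraction. First, I would fix $x \in X$ and use hypothesis (b) to choose an auxiliary sequence $q_k \in Q$ with $q_k \to x$ in the metric $d$ and $K_\infty(q_k) \to K_\infty(x)$. For each such $q_k$, hypothesis (a) ensures $K_n(q_k) \to K_\infty(q_k)$ as $n \to \infty$, so I can select an index $N_k \in \NN$ beyond which $|K_n(q_k) - K_\infty(q_k)| < 1/k$. After replacing $N_k$ by $\max(N_k, N_{k-1}+1)$ if necessary, I may assume that the $N_k$ are strictly increasing and diverge to $+\infty$.

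Next, I would define the recovery sequence by the diagonal rule $r_n := q_k$ whenever $N_k \leq n < N_{k+1}$ (and, say, $r_n := q_1$ for $n < N_1$, so that $r_n$ is defined for every $n$). Each $n$ belongs to a unique block, and for such $n$ the triangle inequality gives
\begin{equation*}
  |K_n(r_n) - K_\infty(x)| \leq |K_n(q_k) - K_\infty(q_k)| + |K_\infty(q_k) - K_\infty(x)| < \frac{1}{k} + |K_\infty(q_k) - K_\infty(x)|,
\end{equation*}
together with $d(r_n, x) = d(q_k, x)$. Since $N_k \to \infty$, sending $n \to \infty$ forces $k \to \infty$, so the right-hand side tends to zero by the choice of $\{q_k\}$ and by construction of $N_k$; simultaneously $d(q_k, x) \to 0$. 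This delivers $r_n \to x$ in $X$ and $K_n(r_n) \to K_\infty(x)$, as required, and $r_n \in Q$ by construction.

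The whole argument is elementary and presents no real technical obstacle; the only point requiring minor care is the strict monotonicity of the indices $N_k$ (so that the function $n \mapsto k(n)$ is well-defined and forced to infinity), and the cosmetic choice of $r_n$ on the initial segment $n < N_1$ which does not affect the tail behaviour.
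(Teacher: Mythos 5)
Your proof is correct and follows essentially the same diagonal-extraction argument as the paper: choose $q_k \to x$ via hypothesis (b), use hypothesis (a) to select block thresholds $N_k$ with $|K_n(q_k)-K_\infty(q_k)|<1/k$ for $n\ge N_k$, and define $r_n := q_k$ on the block $N_k \le n < N_{k+1}$. The paper's notation ($l_n$, $n_{L,q}$) is just a less transparent packaging of the same index bookkeeping.
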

\begin{proof}
  The proof is by a diagonal argument. Take any $x\in X$ and take the corresponding sequence $q_n\to x$ such that $K_{\infty}(q_{n})\rightarrow K_{\infty}(x)$.
  By assumption, for any $q\in Q$ and $L>0$ there exists a $n_{L,q}$ such that for any $n\geq n_{L,q}$ there holds $d(K_n(q),K_\infty(q))<1/L$. Define
  \begin{equation*}
  l_n:= \begin{cases}
          1,  &1\leq n < n_{2,q_2},\\
          2,  &n_{2,q_2}\leq n < \max\{n_{2,q_2},n_{3,q_3}\},\\
          \hdots
        \end{cases}
  \end{equation*}
  Take the subsequence $r_n:=q_{l_n}$. Observe that $l_n\to \infty$ as $n\to\infty$ such that indeed $q_{l_n}\to x$, and:
  \begin{equation*}
    d(K_n(q_{l_n}),K_\infty(x)) \,\leq\, \underbrace{d(K_n(q_{l_n}),K_\infty(q_{l_n}))}_{\leq \frac1{l_n}}+d(K_\infty(q_{l_n}),K_\infty(x)) \to 0.
  \end{equation*}
\end{proof}

For a fixed $\rho_0$ satisfying the assumptions of Theorem~\ref{th:main result}, we want to apply Proposition~\ref{th:dense recovery} to the situation where
\begin{align*}
  &X=\Ptwo(\RR), \\
  &Q=Q(\rho_0)= \Big\{\rho=\rho(x)dx \in \mathcal{P}_{2}(\RR): \rho(x) \text{ is bounded from below by a positive constant in every compact set}, \\&\qquad \qquad\qquad \qquad\mathcal{F}(\rho),\|\Delta\rho\|^{2}_{-1,\rho},\int_{\RR}|\nabla\Psi(x)|^{2}\rho(x)\,dx<\infty, \text{ and there exists a } M>0 \text{ such that }
\\&\qquad \qquad\qquad\qquad\qquad \qquad \rho_0(x)=\rho(x) \text{ for all } |x|>M\Big\},\\
  &K_n(\rho)=J_{h_{n}}(\rho\,|\rho_0)-\frac{W_{2}^{2}(\rho_{0},\rho)}{4h_{n}}, \hspace{4pt} \text{where} \hspace{4pt} h_{n} \hspace{4pt} \text{an arbitrary sequence converging to zero},\\
  &K_\infty(\rho)=\frac12 \mathcal{F}(\rho)-\frac12 \mathcal{F}(\rho_0).
\end{align*}

Assumption~\textit{(a)} of Proposition~\ref{th:dense recovery}, i.e. pointwise convergence for every $\rho_1 \in Q(\rho_{0})$, can be proven as follows. Take $\rho_{1} \in Q(\rho_{0})$ and let $\rho_{t}$ be the geodesic that connects  $\rho_{0}$ and $\rho_{1}$. In the following Lemma~\ref{th:uniformly bounded}, we will prove that $\|\Delta\rho_{t}\|^{2}_{-1,\rho_{t}}$ and $\int_{\RR}|\nabla\Psi(x)|^{2}\rho_{t}(x)dx$ are uniformly bounded, so that we have
\begin{multline*}
  \int_{0}^{1}\left\|\frac{\partial \rho_{t}}{\partial t}-\tau(\Delta\rho_{t}+\div(\rho_{t}\nabla\Psi))\right\|^2_{-1,\rho_{t}}\,dt\\
    \leq 3\int_{0}^{1}\left\|\frac{\partial \rho_{t}}{\partial t}\right\|^2_{-1,\rho_{t}}\,dt +3\tau^2\int_{0}^{1}\|\Delta\rho_{t}\|^2_{-1,\rho_{t}}\,dt+3\tau^2\int_{0}^{1}\|\div(\rho_{t}\nabla\Psi)\|^2_{-1,\rho_{t}}\,dt< \infty.
\end{multline*}
By applying Lemma~\ref{exp} for the subquadratic case or \cite[Lem. 2.6]{Feng2011} for the superquadratic
case:
\begin{align*}
\lim_{\tau\rightarrow 0}\Kl J_{\tau}(\rho_{1}|\rho_0)-\frac{W_{2}^{2}(\rho_{0},\rho_{1})}{4\tau}\Kr &\leq \lim_{\frac{\tau}{2}\rightarrow 0}\Bigg{[}  \tau\int_{0}^{1}\Kl\int_{\RR}\Kl\frac{(\rho'_{t}(x))^{2}}{\rho_{t}(x)}+|\nabla\Psi(x)|^{2}\rho_{t}(x)\Kr dx\Kr dt\\
&+\frac{1}{2}\mathcal{F}(\rho_{1})-\frac{1}{2}\mathcal{F}(\rho_{0})\Bigg{]} = \frac{1}{2}\mathcal{F}(\rho_{1})-\frac{1}{2}\mathcal{F}(\rho_{0}).
\end{align*}\\
The pointwise convergence then follows from this together with the lower bound~\eqref{eq:lower bound2}.

To prove the uniform bounds:
\begin{lemma}
\label{th:uniformly bounded}
Let $\Psi\in C^{2}(\RR)$  be convex. Let $\rho_{0}=\rho(x)dx\in\Pc_2(\RR)$  be asolutely continuous with respect to the Lesbegue measure, where $\rho(x)$ is bounded from below by a positive constant in every compact set. Let $\rho_{1}\in Q(\rho_{0})$ and $\rho_{t}$ be the geodesic that connects  $\rho_{0}$ and $\rho_{1}$. Assume that $\mathcal{F}(\rho_{0})$ , $\|\Delta\rho_{0}\|^{2}_{-1,\rho_{0}}$ and $\int_{\RR}|\nabla\Psi(x)|^{2}\rho_0(x)dx$ are all finite. Then
$\mathcal{F}(\rho_{t})$, $\|\Delta\rho_{t}\|^{2}_{-1,\rho_{t}}$ and $\int_{\RR}|\nabla\Psi(x)|^{2}\rho_{t}(x)\,dx $ are uniformly bounded with respect to $t$.
\end{lemma}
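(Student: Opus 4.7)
The plan is to exploit the one-dimensional structure via the explicit form of the Wasserstein geodesic, together with the key feature of the class $Q(\rho_0)$: that $\rho_1$ agrees with $\rho_0$ outside some compact $[-M,M]$.

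First, I set up the geodesic as $\rho_t=(T_t)_\#\rho_0$ with $T_t(y)=(1-t)y+tT(y)$ where $T=F_1^{-1}\circ F_0$ is the monotone rearrangement. Because $\rho_0=\rho_1$ outside $[-M,M]$ and both are probability densities, the CDFs $F_0$ and $F_1$ coincide on $\{|y|\geq M\}$, so $T(y)=y$ and hence $T_t(y)=y$ there. In particular $\rho_t\equiv \rho_0$ on $\{|x|>M\}$ for every $t\in[0,1]$. Also, since $\sqrt{\rho_0},\sqrt{\rho_1}\in H^1(\RR)\hookrightarrow C_0(\RR)$, both densities are bounded above by some $C>0$, and by assumption they are bounded below on $[-M,M]$ by some $c>0$; consequently $T'(y)=\rho_0(y)/\rho_1(T(y))\in[c/C,\,C/c]$ for $y\in[-M,M]$, and $T_t'(y)=(1-t)+tT'(y)$ is bounded away from $0$ and $\infty$ uniformly in $t$.

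The bound on $\mathcal{F}(\rho_t)$ follows from McCann displacement convexity: $\mathcal{S}$ is geodesically convex and $\mathcal{E}(\rho)=\int\Psi\,d\rho$ is geodesically convex because $\Psi$ is convex, giving $\mathcal{F}(\rho_t)\leq(1-t)\mathcal{F}(\rho_0)+t\mathcal{F}(\rho_1)$. A lower bound comes from $\Psi$ being bounded below, the classical estimate $\mathcal{S}(\rho)\geq -C(1+\int|x|^2\,d\rho)$ on $\Pc_2$, and the convexity of $y\mapsto |T_t(y)|^2$, which bounds the second moment of $\rho_t$ by the maximum of the endpoint second moments. For $\int|\Psi'|^2\rho_t$, I split at $|x|=M$: outside, $\rho_t=\rho_0$ and the tail contribution is dominated by $\int_\RR|\Psi'|^2\rho_0<\infty$; inside, $|\Psi'|$ is continuous on $[-M,M]$ hence bounded, so the contribution is $\leq(\max_{[-M,M]}|\Psi'|^2)\cdot 1$.

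The main obstacle is the Fisher information, which is not displacement convex in general; here the 1D explicit formula saves us. Differentiating $\rho_t(T_t(y))T_t'(y)=\rho_0(y)$ in $y$ gives, after the change of variables $x=T_t(y)$,
\begin{equation*}
\int_\RR \frac{(\rho_t'(x))^2}{\rho_t(x)}\,dx=\int_\RR \frac{\bigl(\rho_0'(y)-\rho_0(y)T_t''(y)/T_t'(y)\bigr)^2}{\rho_0(y)}\,dy.
\end{equation*}
Outside $[-M,M]$ this reduces to $\int_{|y|>M}(\rho_0')^2/\rho_0\leq \|\Delta\rho_0\|^2_{-1,\rho_0}<\infty$. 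Inside, I estimate by
\begin{equation*}
2\int_{-M}^M\frac{(\rho_0')^2}{\rho_0}\,dy+2\int_{-M}^M\frac{\rho_0(y)\,(T_t''(y))^2}{(T_t'(y))^2}\,dy,
\end{equation*}
the first integral bounded by $\|\Delta\rho_0\|^2_{-1,\rho_0}$. Since $T_t''=tT''$ with $t^2\leq 1$ and $T_t'$ uniformly bounded below, it remains to control $\int_{-M}^M\rho_0\,(T'')^2\,dy$. Differentiating $\rho_1(T(y))T'(y)=\rho_0(y)$ yields
\begin{equation*}
T''(y)=\frac{\rho_0'(y)-(T'(y))^2\rho_1'(T(y))}{\rho_1(T(y))},
\end{equation*}
and using $\rho_1\circ T\geq c$, $T'$ bounded, and $\rho_0\leq C$,
\begin{equation*}
\int_{-M}^M\rho_0(y)(T''(y))^2\,dy\leq C'\!\int_{-M}^M(\rho_0'(y))^2\,dy+C'\!\int_{-M}^M\rho_0(y)(\rho_1'(T(y)))^2\,dy.
\end{equation*}
The first term is $\leq C'C\int(\rho_0')^2/\rho_0=C'C\,\|\Delta\rho_0\|^2_{-1,\rho_0}$. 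For the second, pushing forward by $T$ (using $\rho_0(y)\,dy=\rho_1(z)\,dz$ via $z=T(y)$, together with $T([-M,M])=[-M,M]$) converts it to $\int_{-M}^M(\rho_1'(z))^2\,dz\leq C\,\|\Delta\rho_1\|^2_{-1,\rho_1}<\infty$. Assembling these estimates gives $\|\Delta\rho_t\|^2_{-1,\rho_t}$ uniformly bounded in $t$, completing the proof.
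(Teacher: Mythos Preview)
Your proof is correct and follows essentially the same strategy as the paper: the one-dimensional monotone rearrangement with $T\equiv\mathrm{id}$ outside $[-M,M]$ (so $T'$ is uniformly bounded above and below), the Monge--Amp\`ere relation to bound the Fisher information of $\rho_t$ by those of $\rho_0$ and $\rho_1$, a compact/tail split for $\int|\Psi'|^2\rho_t$, and geodesic convexity for $\mathcal{F}$. Two minor computational slips---your change-of-variables identity for the Fisher information is missing a factor $1/(T_t')^2$, and the pushforward of $\int_{-M}^M\rho_0\,(\rho_1'\circ T)^2$ under $T$ yields $\int_{-M}^M(\rho_1')^2\rho_1$ rather than $\int_{-M}^M(\rho_1')^2$---are harmless, since you have already established uniform two-sided bounds on $T_t'$ and on $\rho_0,\rho_1$ over $[-M,M]$.
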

\begin{proof}
Let $T(x)$ be the optimal map that transports $\rho_0(dx)$ to $\rho_1(dx)$. The geodesic that connects $\rho_{0}$ and $\rho_{1}$ is defined by
\begin{equation*}
  \rho_t(x)=((1-t)x+tT(x))_\sharp \rho_0(x).
\end{equation*}
First we prove that $\|\Delta\rho_{t}\|^{2}_{-1,\rho_{t}}$ is uniformly bounded with respect to $t$. In the real line, the map $T(x)$ can be determined via the cumulative distribution functions as follows \cite[Section 2.2]{Villani2003}). Let $F(x)$ and $G(x)$ be respectively the cumulative distribution functions of $\rho(dx)$ and $\rho_1(dx)$, i.e.
\begin{equation*}
  F(x)=\int_{-\infty}^x\rho_0(x)\,dx; ~~~ G(x)=\int_{-\infty}^x\rho_1(x)\,dx.
\end{equation*}
Then $T=G^{-1}\circ F$. We have
\begin{equation}
  F(M)+\int_M^{+\infty}\rho_0(x)\,dx =G(M)+\int_M^{+\infty}\rho_1(x)\,dx = 1.\label{totalsum}
\end{equation}
From \eqref{totalsum} and by the assumption that $\rho_0(x)=\rho_1(x)$ for all $|x|>M$ we find that $F(M)=G(M)$. Hence for all $x$ such that $|x|>M$ we have
\begin{equation*}
  F(x)=F(M)+\int_M^x\rho_0(x)\,dx =G(M)+\int_M^x\rho_1(x)\,dx = G(x).
\end{equation*}
Consequentially, for all $x$ with $|x|>M$ we have $T(x)=(G^{-1}\circ F)(x) = x$. Therefore $T'(x)=1$ for all $|x|>M$. This, together with the fact that $T$ is a $C^1$ function, implies that $T'(x)$ is bounded. Moreover $T(x)$ satisfies the Monge -  Amp\`{e}re equation.
\begin{equation*}
  \rho_{0}(x)=\rho_{1}(T(x))T'(x).
\end{equation*}
or equivalently (since $\rho_1(x)>0)$,
\begin{equation}
  T'(x)=\frac{\rho_0(x)}{\rho_1(T(x))}.\label{derT}
\end{equation}
Since the densities $\rho_0, \rho_1$ are absolutely continuous (recall that $\sqrt\rho_{0},\sqrt\rho_{1}\in H^{1}(\RR)$) and $T'(x)$ in $C^{1}$ and strictly positive, we get
\begin{align*}
\frac{T''(x)}{T'(x)}&=(\log(T'(x)))'
\\&= \Kl\log(\rho_{0}(x))-\log(\rho_{1}(T(x))\Kr '
\\&=\frac{\rho'_{0}(x)}{\rho_{0}(x)} -\frac{\rho'_{1}(T(x))T'(x)}{\rho_{1}(T(x))}.
\end{align*}
Set $T_{t}(x)=tx +(1-t)T(x)$. For $ 0\leq t \leq 1$ we have
\begin{equation}
\label{eq:Ampere}
  \rho_{t}(x)=\rho_{1}(T_{t}(x))T_{t}'(x),
\end{equation}
Since $\rho_1(T_{t}(x))$ and $T_{t}'(x)$ are both absolutely continuous so is $\rho_{t}(x)$. Hence the derivative appeared in \eqref{FisherI} for $\|\Delta\rho_{t}\|^{2}_{-1,\rho_{t}}$ is the classical derivative. Substituting \eqref{eq:Ampere} into \eqref{FisherI} we get
\begin{align}
\int_{\RR}\frac{( \rho'_{t}( x))^{2}}{\rho_{t}( x) }\,dx & =\int_{\RR}\frac{\lbrack( \rho_{1}( T_{t}(x))T_{t}'( x))'\rbrack^{2}}{\rho_{1}(T_{t}(x))T_{t}'(x)} \,dx \nonumber\\
&=\int_{\RR}\frac{\lbrack\rho'_{1}( T_{t}(x))T_{t}'( x)^2 +\rho_1(T_t(x))T_{t}''(x)\rbrack^{2}}{\rho_{1}(T_{t}(x))T_{t}'(x)}\,dx\nonumber\\
& \leq 2 \int_{\RR}\frac{ (\rho'_{1}( T_{t}( x ) ))^{2}( T'_{t}( x))^{4}} {\rho_{1}( T_{t}( x)) T'_{t}( x ) }\,dx+ 2\int_{\RR}\frac{( \rho_{1} ( T_{t}( x )) T_{t}''( x ) )^{2} }{\rho_{1}( T_{t}( x)) T_{t}'( x) }\,dx \nonumber\\
& = 2 \int_{\RR}\frac{ (\rho'_{1}( T_{t}( x ) ))^{2}} {\rho_{1}( T_{t}( x))}(T'_{t}(x))^3\,dx + 2\int_{\RR}\rho_{1}(T_{t}(x))\frac{(T''_{t}(x))^{2}}{T_{t}'(x)}\,dx\label{geoestimate}
\end{align}
Note that in the inequality above we have used the Cauchy - Schwarz inequality $(a+b)^2\leq 2(a^2+b^2)$. To proceed we will estimate each term in the right hand side of \eqref{geoestimate} using the fact that $|T'(x)|$ is bounded and $\|\Delta\rho_{0}\|^{2}_{-1,\rho_{0}},\|\Delta\rho_{1}\|^{2}_{-1,\rho_{1}}<\infty$. For the first part we have
\begin{align}
\int_{\RR}\frac{ (\rho'_{1}( T_{t}( x ) ))^{2}} {\rho_{1}( T_{t}( x))}(T'_{t}(x))^3 \,dx &=\int_{\RR}\frac{ (\rho'_{1}( T_{t}( x ) ))^{2}} {\rho_{1}( T_{t}( x))}(T'_{t}(x))(T'_{t}(x))^{2} \,dx\nonumber
\\&\leq C^{2}\int_{\RR}\frac{ (\rho'_{1}( T_{t}( x ) ))^{2}} {\rho_{1}( T_{t}( x))}(T'_{t}(x))\,dx\nonumber
\\&=C^{2}\int_{\RR}\frac{(\rho'_{1}(x))^{2}}{\rho_{1}(x)}\,dx\nonumber
\\&=C^2\|\Delta\rho_{1}\|^{2}_{-1,\rho_{1}}.\label{estimatep1}
\end{align}
Let $B$ be the ball of radius $M$ centered at the origin. Since $T''(x)=0$ for all $|x|>M$ we can restrict our calculation for the second part in the ball $B$.
\begin{align}
\int_{\RR}\rho_{1}(T_{t}(x))\frac{(T''_{t}(x))^{2}}{T_{t}'(x)}\,dx&=\int_B\rho_{1}(T_{t}(x))\frac{(T''_{t}(x))^{2}}{T_{t}'(x)}\,dx\nonumber
\\&=\int_{B}\rho_{1}(T_{t}(x))\frac{((1-t)T''(x))^{2}}{T_{t}'(x)}\, dx\nonumber
\\&=\int_B\rho_{1}(T_{t}(x))T'_{t}(x)\Kl\frac{T'(x)(1-t)}{T'_{t}(x)}\Kr^{2}\Kl\frac{T''(x)}{T'(x)}\Kr^{2}\,dx\nonumber
\\&=\int_B\rho_{1}(T_{t}(x))T'_{t}(x)\Kl\frac{T'(x)(1-t)}{t+(1-t)T'(x)}\Kr^{2}\Kl\frac{\rho'_{0}(x)}{\rho_{0}(x)} -\frac{\rho'_{1}(T(x))T'(x)}{\rho_{1}(T(x))}\Kr^{2}\,dx\nonumber
\\&\leq  2\int_B\rho_{1}(T_{t}(x))T'_{t}(x)\Kl\frac{\rho'_{0}(x)}{\rho_{0}(x)}\Kr^2\,dx\nonumber
\\&+2\int_B\rho_{1}(T_{t}(x))T'_{t}(x)\Kl\frac{\rho'_{1}(T(x))T'(x)}{\rho_{1}(T(x))}\Kr^{2} dx\nonumber
\\&= 2\int_B\frac{\rho_{1}(T_{t}(x))T'_{t}(x)}{\rho_{0}(x)}\Kl\frac{(\rho'_{0}(x))^{2}}{\rho_{0}(x)}\Kr dx\nonumber
\\&~~~+2\int_B\frac{\rho_{1}(T_{t}(x))T'_{t}(x)T'(x)}{\rho_{1}(T(x))}\Kl\frac{(\rho'_{1}(T(x)))^{2}}{\rho_{1}(T(x))}T'(x)\Kr\,dx\nonumber
\\&\leq C\Kl\int_B\frac{(\rho'_{0}(x))^{2}}{\rho_{0}(x)}\,dx+\int_B\frac{(\rho'_{1}(T(x)))^{2}}{\rho_{1}(T(x))}T'(x)\,dx\Kr\nonumber
\\&\leq C (\|\Delta\rho_{0}\|^{2}_{-1,\rho_{0}}+\|\Delta\rho_{1}\|^{2}_{-1,\rho_{1}}). \label{estimatep2}
\end{align}
From (\ref{geoestimate}), (\ref{estimatep1}) and (\ref{estimatep2}) we find that
\begin{equation*}
\|\Delta\rho_{t}\|^{2}_{-1,\rho_{t}}=\int_{\RR}\frac{( \rho'_{t}( x))^{2}}{\rho_{t}( x) }\,dx\leq C (\|\Delta\rho_{0}\|^{2}_{-1,\rho_{0}}+\|\Delta\rho_{1}\|^{2}_{-1,\rho_{1}}).%\label{geoestimatep1}
\end{equation*}
It remains to prove the boundedness of the functional $\int_{\RR}|\nabla\Psi(x)|^{2}\rho_{t}(x)dx $. \\

Since $T(x)=x$ for $|x|>M$ we have $\rho_t(x)=\rho_1(x)$ for $|x|>M$. Hence
\begin{align*}
\int_{\RR}|\nabla\Psi(x)|^{2}\rho_{t}(x)\,dx&=\int_{B}|\nabla\Psi(x)|^{2}\rho_{t}(x)\,dx+\int_{|x|>M}|\nabla\Psi(x)|^{2}\rho_{t}(x)\,dx
\\&=\int_{B}|\nabla\Psi(x)|^{2}\rho_{t}(x)\,dx+\int_{|x|>M}|\nabla\Psi(x)|^{2}\rho_{1}(x)\,dx
\\ &\leq C\int_{B}\rho_{t}(x)\,dx+\int_{|x|>M}|\nabla\Psi(x)|^{2}\rho_{1}(x)\,dx
\\ &\leq C+\int|\nabla\Psi(x)|^{2}\rho_{1}(x)\,dx<\infty.
\end{align*}
Finally the result for $\mathcal{F}(\rho_{t})$ comes from the fact that $\mathcal{F}$ is geodesically convex.
\end{proof}

Finally, to prove assumption~\textit{(b)} of Proposition~\ref{th:dense recovery}, i.e. the existence of the recovery sequence in the dense set.
\begin{lemma}
Let $\rho_{0},\rho_{1} \in \mathcal{P}_{2}(\RR)$ and $\Psi \in C^{2}(\RR)$ with $\Psi(x) > -A -B|x|^{2}$ for some positive constants (this includes both our cases). Assume that $\rho_{0}$ is bounded from below by a positive constant in every compact set and that $\mathcal{F}(\rho_{0}), \|\Delta\rho_{0}\|^{2}_{-1,\rho_{0}}$ and $\int_{\RR^{n}}|\nabla\Psi(x)|^{2}\rho_{1}(x)\,dx $ are all finite. Then, there exists a sequence $k_{n}\in Q(\rho_{0})$  such that  $k_{n}\rightarrow \rho_{1} \hh$ with respect to Wasserstein distance, and $\mathcal{F}(k_{n})\rightarrow \mathcal{F}(\rho_{1})$.
\end{lemma}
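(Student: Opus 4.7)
The plan is to construct $k_n$ explicitly by mollifying $\rho_1$ and pasting in the $\rho_0$ tails beyond a growing window $[-M_n,M_n]$, then to verify that $k_n\in Q(\rho_0)$, $k_n\to\rho_1$ in $W_2$, and $\mathcal{F}(k_n)\to\mathcal{F}(\rho_1)$. If $\mathcal{F}(\rho_1)=+\infty$, the narrow lower semicontinuity of $\mathcal{F}$ under the growth bound $\Psi(x)\ge -A-B|x|^2$, together with convergence of second moments, forces $\mathcal{F}(k_n)\to+\infty=\mathcal{F}(\rho_1)$ automatically, so the effort concentrates on $\mathcal{F}(\rho_1)<\infty$.

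\textbf{Construction.} Pick $M_n\to\infty$, $\epsilon_n\downarrow 0$ and $b_n\downarrow 0$ with $b_n M_n^3\to 0$. Let $\chi_n\colon\RR\to[0,1]$ be a smooth cutoff equal to $0$ on $[-M_n+1,M_n-1]$ and equal to $1$ outside $[-M_n,M_n]$, with uniformly bounded derivatives, and let $\theta_{\epsilon_n}$ be the Gaussian kernel from Lemma~\ref{Ibound}. Define
\[
k_n(x):=(1-\chi_n(x))\bigl[a_n(\rho_1*\theta_{\epsilon_n})(x)+b_n\bigr]+\chi_n(x)\rho_0(x),
\]
with $a_n>0$ chosen so that $\int k_n\,dx=1$. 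Because $b_n\to 0$ and the tail masses of $\rho_0$ and $\rho_1$ beyond $M_n-1$ vanish, $a_n\to 1$. By construction $k_n\equiv\rho_0$ on $\{|x|\ge M_n\}$, and $k_n\ge (1-\chi_n)b_n+\chi_n\rho_0$ is bounded below by a positive constant on every compact set (using the hypothesis on $\rho_0$ on the transition strip). The $H^1$ regularity of $\sqrt{k_n}$ and the finiteness of $\|\Delta k_n\|^2_{-1,k_n}$, $\mathcal{F}(k_n)$, and $\int|\nabla\Psi|^2 k_n\,dx$ all follow by splitting into bulk, transition and tail regions, using Lemma~\ref{Ibound}, the boundedness of $\chi_n$ and its derivatives, and the standing assumptions on $\rho_0$ and $\rho_1$.

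\textbf{Convergence.} Narrow convergence $k_n\to\rho_1$ follows by a dominated-convergence argument split into the three regions, using $\rho_1*\theta_{\epsilon_n}\to\rho_1$ in $L^1(\RR)$, $b_n M_n\to 0$, and $\chi_n\to 0$ pointwise. Second moments converge since $\int_{-M_n}^{M_n} x^2 b_n\,dx=\tfrac{2}{3}b_n M_n^3\to 0$ and $\int_{|x|>M_n}x^2\rho_0\,dx\to 0$, so $k_n\to\rho_1$ in $W_2$. The potential energy $\mathcal{E}(k_n)\to\mathcal{E}(\rho_1)$ follows from narrow convergence together with uniform integrability supplied by the quadratic lower bound on $\Psi$ and the uniform second-moment bound. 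For the entropy, Jensen's inequality yields $\mathcal{S}(\rho_1*\theta_{\epsilon_n})\le \mathcal{S}(\rho_1)$ on the bulk, while $\mathcal{S}(\rho_0)<\infty$ controls the contribution from the vanishing tail region; combined with narrow lower semicontinuity of $\mathcal{S}$ this gives $\mathcal{S}(k_n)\to\mathcal{S}(\rho_1)$.

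\textbf{Main obstacle.} The technically delicate step is the upper bound $\limsup_n\mathcal{S}(k_n)\le\mathcal{S}(\rho_1)$: the additive perturbation $b_n$ inside the bulk and the convex combination with $\rho_0$ in the transition strip break the clean Jensen estimate. I expect this to be handled by an inequality of the form $s\log s\le s\log s_*+(s-s_*)$ applied around appropriate baselines $s_*$ on each region, followed by dominated convergence driven by $b_n\to 0$, $\epsilon_n\to 0$ and $M_n\to\infty$; nevertheless, this accounting is the most involved part of the verification.
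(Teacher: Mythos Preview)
Your approach is the same blueprint as the paper's: mollify $\rho_1$ on a growing window and glue in the $\rho_0$ tails. The paper uses piecewise-quadratic bridges on a thin strip instead of a smooth cutoff, omits your additive floor $b_n$ (the Gaussian mollification $\rho_1^{M_n}*\theta_{\epsilon(n)}$ is already strictly positive on compacta), and rather than arguing via lower semicontinuity plus Jensen, simply chooses $M_n$ first and then $\epsilon(n)$ so that a finite list of error bounds---including $\bigl|\int_{B(0,M_n)}(\rho_1^{M_n}*\theta_{\epsilon(n)})\log(\rho_1^{M_n}*\theta_{\epsilon(n)})-\rho_1\log\rho_1\,dx\bigr|<1/n$---hold by construction, reducing the entropy convergence to bookkeeping.

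Two places where your sketch actually breaks. First, the inequality you propose for the entropy upper bound, $s\log s\le s\log s_*+(s-s_*)$, goes the wrong way: convexity of $s\mapsto s\log s$ gives $s\log s\ge s\log s_*+(s-s_*)$, so it cannot deliver $\limsup\mathcal{S}(k_n)\le\mathcal{S}(\rho_1)$. If you drop $b_n$ (which is unnecessary), your Jensen estimate applies cleanly on the bulk, the transition strip contributes $o(1)$ because its mass vanishes and the integrand is bounded there, and the tail is controlled by $\int|\rho_0\log\rho_0|<\infty$. Second, your argument for $\mathcal{E}(k_n)\to\mathcal{E}(\rho_1)$ via ``uniform integrability supplied by the quadratic lower bound on $\Psi$'' only controls $\Psi^-$, not $\Psi^+$; moreover in the superquadratic case $b_n\int_{|x|\le M_n}\Psi$ is not $O(b_n M_n^3)$ and need not vanish under your hypothesis $b_n M_n^3\to0$. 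The paper avoids both issues by the direct bulk/tail split, using $\int|\Psi|\rho_0<\infty$ on the tail and $L^1$ convergence of the mollification against the continuous $\Psi$ on the compact bulk.
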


\begin{proof}
We will assume that $\mathcal{F}(\rho_{1})<\infty$, otherwise the construction is trivial.
Let $n \in \NN$. Since $\int_{\RR}\rho_{0}(x)x^{2}\,dx<\infty$ and  $\int_{\RR}\rho_{0}(x)|\Psi(x)|\,dx<\infty$
there is a set $A_{1}$ of finite Lebesgue measure such that for every $x\in A_{1}$ we have that $\rho_{0}(x)<\min\{\frac{1}{n|\Psi(x)|},\frac{1}{nx^{2}}\}$. Similarly there is a set $A_{2}$ of finite Lebesgue measure such that for every $x\in A_{2}$ we have that $\rho_{1}(x)<\min\{\frac{1}{n|\Psi(x)|},\frac{1}{nx^{2}}\}$. We can even ask for $A_{2}$ to contain only Lebesgue points of $\rho_{1}$ to compensate for the lack of continuity.

Let $M_n>1$ with $M_n\in A_{1}\cap A_{2}$ such that
\begin{equation*}
  \int_{B^{c}(0,M_n)}\!\Big\lbrack\rho_{i}(x)+|\rho_{i}(x)\log\rho_{i}(x)|+\rho_{i}(x)x^{2}+\rho_{i}(x)|\Psi(x)|\Big\rbrack\, dx<\frac{1}{n},\hhh i=1,2 .
\end{equation*}

Let $\theta_{\epsilon}$ be as in Lemma~\ref{Ibound}. By the theory of mollifications there is a $\theta_{\epsilon(n)}$ that satisfies the following
\begin{itemize}
\item $\int_{B(0,M_n)}\big|(\rho^{M_n}_{1}*\theta_{\epsilon(n)})(x)-\rho_{1}(x)\big|\,dx < \frac{1}{n}$,
\item $\int_{B(0,M_n)}\big|((\rho^{M_n}_{1}*\theta_{\epsilon(n)})(x)-\rho_{1}(x))\Psi(x)\big|\,dx < \frac{1}{n}$,
\item $\Big|\int_{B(0,M_n)}(\rho^{M_n}_{1}*\theta_{\epsilon(n)})(x)\log(\rho^{M_n}_{1}*\theta_{\epsilon(n)})(x)-\rho_{1}(x)\log\rho_{1}(x)\,dx\Big| < \frac{1}{n}$,
\item $(\rho^{M_n}_{1}*\theta_{\epsilon(n)})(M_n) <\min\{\frac{1}{n\Psi(M_n)},\frac{1}{n}\}$,
\item $(\rho^{M_n}_{1}*\theta_{\epsilon(n)})(x)>0 ,\hhh \forall x\in B(0,M_n)$,
\end{itemize}
where $$\rho^{M_n}_{1}=\begin{cases}
                          \rho_{1}(x) &\mbox{if}\hhh |x|\leq M_n\\
                           0 &\mbox{if}\hhh |x|> M_n.
                         \end{cases}
$$

Since $\Psi(x)$ is continuous, there is a $0<a<1$ such that for $x\in [-M_n-a,-M_n+a]\cup[M_n-a,M_n+a]$ we have $\rho_{0}(M_n)<\min\{\frac{1}{n|\Psi(x)|},\frac{1}{nx^{2}}\}$ and
$(\rho^{M_n}_{1}*\theta_{\epsilon(n)})(M_n)<\min\{\frac{1}{n|\Psi(x)|},\frac{1}{nx^{2}}\}.$
Now define
\begin{align*}
  &g_{1,n}(x)=\begin{cases}
						(\rho^{M_n}_{1}*\theta_{\epsilon(n)})(x)	&\mbox{if}\hhh |x|\leq M_n,       \\
						(\rho^{M_n}_{1}*\theta_{\epsilon(n)})(M_n)(\frac{x-M_n+a}{a})^{2}   &\mbox{if}\hhh M_n<x< M_n+a,                  \\
						(\rho^{M_n}_{1}*\theta_{\epsilon(n)})(M_n)(\frac{x+M_n+a}{a})^{2}   &\mbox{if}\hhh -M_n-a<x<-M_n, \\
						0	&\mbox{if}\hhh |x|\geq M_n+a,
						\end{cases}
  \intertext{and}
  &g_{2,n}(x)=\begin{cases}
						0	                            &\mbox{if}\hhh |x|\leq M_n-a,      \\
	 					\rho_{0}(M_n)(\frac{M_n-x}{a})^{2}	&\mbox{if}\hhh M_n-a<x<M_n, \\
	 				    \rho_{0}(M_n)(\frac{M_n+a-x}{a})^{2}	        &\mbox{if}\hhh -M_n<x<-M_n+a, \\										                \rho_{0}(x)	                &\mbox{if}\hhh |x|\geq M_n. \\
						\end{cases}
\end{align*}

It is easy to check that  $\|\Delta g_{i,n}\|^{2}_{-1,g_{i,n}}$\footnote{This is a slight abuse of notation since $g_{i,n}$ are actually sub-probability measures.} and $\int g_{i,n}|\nabla\Psi|^{2}$ are finite for each $i=1,2$ and $n\in N$. Also,
\begin{multline*}
  \mathcal{S}(g_{1,n}) = \int_{B(0,M_n)}\left(g_{1,n}(x)\log g_{1,n}(x)-\rho_{1}(x)\log(\rho_{1}(x)\right)\,dx + \int_{B(0,M_n)}\rho_{1}(x)\log(\rho_{1}(x))\,dx  \\
+\int_{M_n<|x|<M_n+a} g_{1,n}(x)\log(g_{1,n}(x))\,dx\to \mathcal{S}(\rho_{1}) \hspace{1cm} \text{ as } n\to\infty.
\end{multline*}

Furthermore, by construction we have $\|g_{1,n}\|_{1}\rightarrow 1 , \|g_{2,n}\|_{1} \rightarrow 0 $ and we can finally define $k_{n}(x)$ by
\begin{equation}
k_{n}(x):= g_{1,n}(x)\frac{1-\|g_{2,n}\|_{1}}{\|g_{1,n}\|_{1}}+g_{2,n}(x),
\end{equation}
where $\|\cdot\|_{1}$ is the $L^1(\RR)$ norm. We have that $k_{n}$ is absolutely continuous and
\begin{align*}
\|\Delta k_{n}\|^{2}_{-1,k_{n}}=\int_{\RR}\frac{(k'_{n}(x))^{2}}{k_{n}(x)}\,dx&\leq \int_{\RR}\frac{\left((g_{1,n}(x)\frac{1-\|g_{2,n}\|_{1}}{\|g_{1,n}\|_{1}}+\int_{\RR}g_{2,n}(x))'\right)^{2}}{g_{1,n}(x)\frac{1-\|g_{2,n}\|_{1}}{\|g_{1,n}\|_{1}}+g_{2,n}(x)}\,dx\\
&\leq \int_{\RR}\frac{2\left( g'_{1,n}(x)\frac{1-\|g_{2,n}\|_{1}}{\|g_{1,n}\|_{1}}\right)^{2}}{g_{1,n}(x)\frac{1-\|g_{2,n}\|_{1}}{\|g_{1,n}\|_{1}}+g_{2,n}(x)}\,dx+\int_{\RR}\frac{2\left( g'_{2,n}(x)\right)^{2}}{g_{1,n}(x)\frac{1-\|g_{2,n}\|_{1}}{\|g_{1,n}\|_{1}}+g_{2,n}(x)}\,dx \\
&\leq \int_{\RR}\frac{1-\|g_{2,n}\|_{1}}{\|g_{1,n}\|_{1}}\frac{2(g'_{1,n}(x))^{2}}{g_{1,n}(x)}\,dx+\int_{\RR}\frac{2(g'_{2,n}(x))^{2}}{g_{2,n}(x)}\,dx\\ &\leq  \frac{2(1-\|g_{2,n}\|_{1})}{\|g_{1,n}\|_{1}} \|\Delta g_{1,n}\|^{2}_{-1,g_{1,n}} +2 \|\Delta g_{2,n}\|^{2}_{-1,g_{2,n}}.
\end{align*}
Hence  $\|\Delta k_{n}\|^{2}_{-1,k_{n}}<\infty$.
For the entropy functional we have:
\begin{align*}
|\mathcal{S}(k_{n})-\mathcal{S}(g_{1,n})|&=\Bigg{|}\int_{\RR}k_{n}(x)\log(k_{n}(x))\,dx- \int_{\RR}g_{1,n}(x)\log(g_{1,n}(x))\,dx\Bigg{|}\\
  &\leq \underbrace{\int_{B(0,M_n-a)}\!\Big| k_{n}(x)\log(k_{n}(x))-g_{1,n}(x)\log(g_{1,n}(x))\Big|\,dx}_{(I)}\\
  &\qquad+\underbrace{\int_{M_n-a\leq |x|\leq M_n+a} \!\Big|k_{n}(x)\log(k_{n}(x))-g_{1,n}(x)\log(g_{1,n}(x))\Big|\,dx}_{(II)}\\
  &\qquad+\underbrace{\int_{B^{c}(0,M_n+a)}\! \Big|k_{n}(x)\log(k_{n}(x))-g_{1,n}(x)\log(g_{1,n}(x))\Big|\,dx}_{(III)}.\\
\end{align*}
%  &\qquad+}_{II}\\
%  &\qquad+\underbrace{\int_{B^{c}(0,M_n+a)}\!\big|\rho_{0}(x)\log(\rho_{0}(x))\big|\,dx}_{III}.
We now show that each of the three parts convergence to $0$ as $n\to\infty$. For the first part:
\begin{equation*}\begin{split}
(I) &=\int_{B(0,M_n-a)}\!\Big| g_{1,n}(x)\frac{1-\|g_{2,n}\|_{1}}{\|g_{1,n}\|_{1}}\log\Kl g_{1,n}(x)\frac{1-\|g_{2,n}\|_{1}}{\|g_{1,n}\|_{1}}\Kr-g_{1,n}(x)\log(g_{1,n}(x))\Big|\,dx\\
    &=\Big{|} 1-\frac{1-\|g_{2,n}\|_{1}}{\|g_{1,n}\|_{1}}\Big{|}\int_{B(0,M_n-a)}\Big|g_{1,n}(x)\log(g_{1,n}(x))\Big|\,dx\\
    &\hspace{6cm}+\frac{1-\|g_{2,n}\|_{1}}{\|g_{1,n}\|_{1}}\log{\frac{1-\|g_{2,n}\|_{1}}{\|g_{1,n}\|_{1}}}\int_{B(0,M_n-a)} |g_{1,n}(x)|\,dx \to 0.
\end{split}\end{equation*}

For the second part:
\begin{equation*}\begin{split}
(II)  &\leq \int_{M_n-a \leq |x| \leq M_n+a} \Big( |k_{n}(x)\log(k_{n}(x))|+|g_{1,n}(x)\log(g_{1,n}(x))|\Big)\,dx\\
      &=    \int_{M_n-a \leq |x| \leq M_n+a}  \big|(g_{1,n}(x)\frac{1-\|g_{2,n}\|_{1}}{\|g_{1,n}\|_{1}}+g_{2,n}(x))\log(g_{1,n}(x)\frac{1-\|g_{2,n}\|_{1}}{\|g_{1,n}\|_{1}}+g_{2,n}(x))\big|\,dx\\
      &\hspace{8cm} +\int_{M_n-a \leq |x| \leq M_n+a} |g_{1,n}(x)\log(g_{1,n}(x))|\,dx.
\end{split}\end{equation*}
Since $g_{1,n}(x), g_{2,n}(x)$ are smaller than $\frac{1}{n}$ in $M_n-a\leq |x|\leq M_n+a$, the right hand side converges to zero.

Part $(III)$ is smaller than $\frac{1}{n}$ by the first property of $M_n$ and therefore it converges to zero.

Finally
\begin{equation*}\begin{split}
\int_{\RR}|k_{n}(x)-\rho_{1}(x)||\Psi(x)|\,dx &=\int_{\RR}\big|g_{1,n}(x)\frac{1-\|g_{2,n}\|_{1}}{\|g_{1,n}\|_{1}}+g_{2,n}(x)-\rho_{1}(x)\big|\big|\Psi(x)\big|\,dx\\
                                              &\leq\Kl \frac{1-\|g_{2,n}\|_{1}}{\|g_{1,n}\|_{1}}\Kr \int_{\RR}|g_{1,n}(x)-\rho_{1}(x)||\Psi(x)|\,dx\\
                                              &\qquad +\Big{|} 1-\frac{1-\|g_{2,n}\|_{1}}{\|g_{1,n}\|_{1}}\Big{|} \int_{\RR}\!|\Psi(x)|\rho_{1}(x)\,dx +\int_{\RR}\!|\Psi(x)|g_{2,n}(x)\,dx\rightarrow 0
\end{split}\end{equation*}
Hence the second property of Proposition~\ref{th:dense recovery} is satisfied.
\end{proof}

\subsection*{Acknowledgements}
We would like to thank Nicolas Dirr, Mark Peletier and Johannes Zimmer for their initial suggestion and support during the project. The current proof of Lemma~\ref{lem:aux1} without probabilistic tools was done after a discussion with Mark Peletier. We also thank Jin Feng, Truyen Nguyen and Patrick van Meurs for their helpful discussion and comments. Manh Hong Duong has received funding from the ITN ``FIRST" of the Seventh Framework Programme of the European Community's (grant agreement number 238702).

\bibliographystyle{alpha}

\end{document}